\newtheorem{theorem}{Theorem}[section]
\newtheorem{definition}[theorem]{Definition}
\newtheorem{example}[theorem]{Example}
\newtheorem{corollary}[theorem]{Corollary}
\newtheorem{lemma}[theorem]{Lemma}
\newtheorem{remark}[theorem]{Remark}
\newtheorem*{theorem*}{Theorem}
\newtheorem*{lemma*}{Lemma}
\newtheorem*{definition*}{Definition}
\DeclareMathOperator{\Dim}{dim}
\DeclareMathOperator{\End}{End}
\DeclareMathOperator{\Aut}{Aut}
\DeclareMathOperator{\rank}{rank}
\DeclareMathOperator{\Hom}{Hom} 
\DeclareMathOperator{\Span}{span}
\DeclareMathOperator{\pr}{pr}
\DeclareMathOperator{\Lie}{Lie}
\DeclareMathOperator{\Id}{Id}
\title[Compact Schur-Weyl Duality]{Compact Schur-Weyl duality: real Lie groups and the cyclotomic Brauer algebra}
\author{Kieran Calvert}             
\thanks{Department of Mathematics, University of Manchester, kieran.calvert@manchester.ac.uk}
\begin{document}

\begin{abstract}  
We show that the centraliser of the maximal compact subgroup of the real groups $O(p,q)$ and $Sp_{2n}(\mathbb{R})$ acting on tensors of their standard representation are isomorphic to cyclotomic Brauer algebras. We also show that for $Sp_{2n}(\mathbb{R})$ this cyclotomic Brauer algebra splits into summands of walled Brauer algebras. 

 \end{abstract}

\maketitle                  

\tableofcontents            
 
\begin{section}{Introduction} 

We study real Lie groups $O(p,q)$ and $Sp_{2n}(\mathbb{R})$ and wish to understand their link with Schur-Weyl duality. If one complexifies these groups, we lose the real structure and obtain $O(p+q)(\mathbb{C})$ and $Sp_{2n}(\mathbb{C})$ these complex groups have a well known \cite{B37,We88} Schur-Weyl duality link with the Brauer algebra. There has been vast interest in extensions and variations on this topic, \cite{KX01,RY04,R05,RS06,DDH08,BS12,DRV12,DRV14,ES14}. However, studies in to Schur-Weyl duality and real Lie groups are limited: \cite{CT11,CT12}.

Given the real groups in question are not compact it often makes sense to study their representation theory via a maximal compact subgroup, which we will denote $K$. 
Let $V$ be the standard complex representation of $O(p,q)$ or $S_{2n}(\mathbb{R})$, this paper focuses on the subalgebra $\End_K(V^{\otimes k})$.

We prove the following new examples of Schur-Weyl duality, we believe this is the first occurence of the cyclotomic Brauer algebra as a centraliser algebra.

\begin{theorem} Let $G = Sp_{2n}(\mathbb{R})$ with maximal compact subgroup $K = U(n)$, and $\mathfrak{k} = Lie(K)$. When $k \leq n$ there is a natural isomorphism between $\End_K(V^{\otimes k})$ and the cyclotomic Brauer algebra $Br_{k,2}[-n,0]$ \end{theorem}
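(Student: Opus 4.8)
The plan is to realise the three families of generators of $Br_{k,2}[-n,0]$ by explicit $K$-equivariant operators on $V^{\otimes k}$, prove they satisfy the defining relations, and then identify the resulting map as an isomorphism via a dimension count that is sharp precisely when $k\le n$. First I would pin down the restriction $V|_K$: embedding $K=U(n)=O(2n)\cap Sp_{2n}(\mathbb{R})$ through a compatible complex structure $J$ on $\mathbb{R}^{2n}\cong\mathbb{C}^n$, the complexification $V=\mathbb{R}^{2n}\otimes_{\mathbb{R}}\mathbb{C}$ splits into the $\pm i$-eigenspaces of $J$, so $V|_K\cong W\oplus W^{*}$ with $W=\mathbb{C}^n$ the defining representation of $U(n)$. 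On $V^{\otimes k}$ I then introduce: $\sigma_i$, the transposition of the $i$-th and $(i{+}1)$-st legs; $\varepsilon_i$, a fixed scalar multiple of the symplectic contraction--coevaluation on legs $i,i{+}1$ (both already $Sp_{2n}(\mathbb{R})$-equivariant, hence $K$-equivariant); and $\tau_i$, the $\mathbb{Z}/2$-grading operator which is $+1$ on the $W$-summand and $-1$ on the $W^{*}$-summand of leg $i$ and the identity elsewhere — this is $K$-equivariant exactly because $U(n)$ preserves the splitting $V=W\oplus W^{*}$, and $\tau_i^2=\Id$. Define $\Phi\colon Br_{k,2}[-n,0]\to\End_K(V^{\otimes k})$ on generators by $s_i\mapsto\sigma_i$, $e_i\mapsto\varepsilon_i$, $y_i\mapsto\tau_i$.

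The bulk of the work is to check that $\Phi$ respects the defining relations of the cyclotomic Brauer algebra. The symmetric-group relations among the $\sigma_i$, and the Brauer relations among the $\sigma_i$ and $\varepsilon_i$, are the classical ones for a $2n$-dimensional symplectic space; the relations $\tau_i^2=\Id$, $\tau_i\tau_j=\tau_j\tau_i$, $\sigma_i\tau_i\sigma_i=\tau_{i+1}$ and $\sigma_i\tau_j=\tau_j\sigma_i$ for $j\neq i,i{+}1$ are immediate. The substance lies in the mixed relations with $\varepsilon_i$ and in the numerical relations fixing the parameters: the key point is that a closed loop carrying one $\tau$ contributes $\tr(\tau|_{W})-\tr(\tau|_{W^{*}})=n-n=0$, giving $\delta_1=0$, whereas the undecorated loop contributes $-n$ for the chosen normalisation of $\varepsilon_i$, giving $\delta_0=-n$; together with the rule for sliding a $\tau$ past a cap these force all the cyclotomic Brauer relations. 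I expect this step — obtaining exactly the parameters $[-n,0]$, which means fixing the right normalisation of $\varepsilon_i$ (so the undecorated loop is $-n$ rather than $\pm 2n$) and tracking the signs produced by the antisymmetry of the symplectic form — to be the most delicate part of the argument, even though each individual check is routine.

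Next I would prove $\Phi$ is surjective. Since $U(n)$ is Zariski-dense in $GL_n(\mathbb{C})$, $\End_K(V^{\otimes k})=\End_{GL_n(\mathbb{C})}\big((W\oplus W^{*})^{\otimes k}\big)$; writing $(W\oplus W^{*})^{\otimes k}=\bigoplus_{\epsilon\in\{0,1\}^k}V_\epsilon$ as a sum of mixed tensor products, $\Hom_{GL_n}(V_\epsilon,V_{\epsilon'})$ vanishes unless $\epsilon$ and $\epsilon'$ contain equally many $W$-factors, and by the first fundamental theorem of invariant theory for $GL_n$ on mixed tensors it is spanned by permutation--contraction diagrams (leg permutations composed with canonical pairings of a $W$-leg against a $W^{*}$-leg). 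Each such diagram can be rewritten as a product of the $\sigma_i$, the $\tau_i$ (used to project onto the relevant $W$- and $W^{*}$-legs and thereby select the correct component of $\varepsilon_i$), and the $\varepsilon_i$, so it lies in $\im\Phi$. Finally $\Phi$ is injective by dimensions: the level-$2$ cyclotomic Brauer algebra is free of rank $2^{k}(2k-1)!!=(2k)!/k!$ regardless of the parameters, while when $k\le n$ the permutation--contraction diagrams above are linearly independent — each isotypic block $\End_{GL_n}(W^{\otimes r}\otimes(W^{*})^{\otimes(k-r)})$ being a walled Brauer algebra of dimension $k!$ once $n\ge k$ — so $\dim_{\mathbb{C}}\End_K(V^{\otimes k})=\sum_{r=0}^{k}\binom{k}{r}^{2}k!=k!\binom{2k}{k}=(2k)!/k!$. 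Hence the surjection $\Phi$ is an isomorphism; the same block decomposition records the splitting into walled Brauer algebras alluded to in the abstract, and the hypothesis $k\le n$ is used only to make this final count exact.
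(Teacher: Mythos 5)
Your proposal is correct and follows essentially the same route as the paper: the same restriction $V|_K\cong W\oplus W^{*}$, the same three families of generators (your grading operator $\tau_i$ coincides with the paper's Cartan-involution operator $\xi_i$, whose $\pm1$-eigenspaces are exactly $W$ and $W^{*}$), the same computation of the parameters $\delta_0=-n$ and $\delta_1=0$, and the same block decomposition of $\End_K(V^{\otimes k})$ into matrix algebras over walled Brauer algebras with the dimension count $\sum_{r}\binom{k}{r}^{2}\,k!=(2k)!/k!$. The only (immaterial) difference is one of bookkeeping: you prove surjectivity directly from the first fundamental theorem for mixed $GL_n$-tensors and deduce injectivity from the dimension count, whereas the paper proves injectivity directly by a marked-diagram argument and deduces surjectivity from the same count.
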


\begin{theorem} Let $G = O(p,q)$, $p+q = 2n+1$ with maximal compact subgroup $K = SO(p) \times SO(q)$, and $\mathfrak{k} = Lie(K)$. When $k \leq n$ there is a natural isomorphism between $\End_K(V^{\otimes k})$ and the cyclotomic Brauer algebra $Br_{k,2}[n,p-q]$ \end{theorem}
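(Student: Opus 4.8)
The plan is to show that $\End_K(V^{\otimes k})$ has a basis of ``$2$-coloured Brauer diagrams'' and then to match that algebra with $Br_{k,2}[n,p-q]$; the argument is parallel to the symplectic case above. First I would restrict: $V|_K=V_+\oplus V_-$, where $V_+\cong\mathbb{C}^p$ is the standard $SO(p)$-module (with $SO(q)$ acting trivially) and $V_-\cong\mathbb{C}^q$ is the standard $SO(q)$-module (with $SO(p)$ trivial). Hence $V^{\otimes k}=\bigoplus_{w\in\{+,-\}^k}V_w$, $V_w=V_{w_1}\otimes\cdots\otimes V_{w_k}$, and since the two factors of $K$ act on complementary tensor legs, reordering legs gives
\[
\End_K(V^{\otimes k})\;\cong\;\bigoplus_{w,w'}\Hom_{SO(p)}\!\big(V_+^{\otimes a_w},V_+^{\otimes a_{w'}}\big)\otimes\Hom_{SO(q)}\!\big(V_-^{\otimes b_w},V_-^{\otimes b_{w'}}\big),
\]
where $a_w,b_w$ count the $+$'s and $-$'s in $w$. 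I would then invoke the first (and second) fundamental theorems of invariant theory for the orthogonal groups on each summand: the hypothesis $k\le n$, together with $p+q=2n+1$, is exactly what makes them applicable in the form needed, so that each $\Hom$-space is spanned by the classical contraction morphisms, with none of the exceptional determinant$/\varepsilon$-tensor invariants of $SO(p)$ or $SO(q)$ contributing, and these morphisms are linearly independent. Assembling, $\End_K(V^{\otimes k})$ acquires a basis indexed by Brauer diagrams on $k$ strands with each arc labelled by $+$ or $-$ (a cup or cap carries one colour, a through-strand keeps its colour); concretely, the operator of a coloured diagram is built from coordinate permutations together with the partial (co)contractions attached to $B|_{V_+}$ and $B|_{V_-}$. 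In particular $\dim_{\mathbb{C}}\End_K(V^{\otimes k})=2^k(2k-1)!!$.

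Next I would produce an algebra homomorphism $\Phi\colon Br_{k,2}[n,p-q]\to\End_K(V^{\otimes k})$ on generators: the symmetric-group generator $\sigma_i$ to the transposition of legs $i,i+1$; the cyclotomic generator $y_i$ to $s^{(i)}:=\mathrm{id}^{\otimes(i-1)}\otimes s\otimes\mathrm{id}^{\otimes(k-i)}$, where $s\in\End_K(V)$ is $+\mathrm{id}$ on $V_+$ and $-\mathrm{id}$ on $V_-$ (it commutes with $SO(p)\times SO(q)$); and the Brauer generator $e_i$ to an operator $E_i$ obtained from the $B$-(co)evaluation on legs $i,i+1$, rescaled separately on the two $K$-isotypic pieces of $V\otimes V$ so that $e_i^2=\delta_0 e_i$ and $e_iy_ie_i=\delta_1 e_i$ become the intended trace identities. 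I would then check the defining relations of the level-$2$ cyclotomic Brauer algebra: the symmetric-group relations are immediate; the cyclotomic relation $(y_i-1)(y_i+1)=0$ holds because $s^2=\mathrm{id}_V$; the $y$--$\sigma$, $y$--$e$ and ``sliding'' relations ($e_iy_i=e_iy_{i+1}$, and the like) are one-line computations in $V^{\otimes 2}$ and $V^{\otimes 3}$ using $B(V_+,V_-)=0$ and $s|_{V_\pm}=\pm 1$; and the loop relations reduce to $\tr_V(\mathrm{id}_V)$ and $\tr_V(s)$, which with the chosen normalisation deliver $\delta_0=n$ and $\delta_1=p-q$.

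Finally, the idempotents $\pi_\pm^{(i)}=\tfrac12(\mathrm{id}\pm\Phi(y_i))$ lie in $\im\Phi$, and conjugating the $E_i$ by them recovers each partial contraction; hence $\im\Phi$ contains all $2$-coloured Brauer diagram operators and, by the first step, $\Phi$ is surjective. Since the decorated diagrams always span the cyclotomic Brauer algebra, $\dim Br_{k,2}[n,p-q]\le 2^k(2k-1)!!=\dim\End_K(V^{\otimes k})$, so surjectivity forces $\Phi$ to be an isomorphism. I expect the main obstacle to be the invariant-theory input of the first step: a second fundamental theorem type statement for $SO(p)\times SO(q)$ acting on $V^{\otimes k}$, in which the bound $k\le n$ must simultaneously exclude the exceptional special-orthogonal invariants and secure linear independence of the coloured diagrams. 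A secondary point of care is pinning down the rescaling of $e_i\mapsto E_i$ so that the loop parameters emerge as $[n,p-q]$ rather than the values carried by the unrescaled contraction.
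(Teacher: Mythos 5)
Your proposal is essentially the paper's own argument: the paper likewise restricts $V$ to $V_p\oplus V_q$, decomposes $\End_K(V^{\otimes k})$ into $\Hom$-spaces between mixed tensor powers of $V_p$ and $V_q$ identified with spans of (uneven) Brauer diagrams, computes the total dimension to be $\frac{(2k)!}{k!}=2^k(2k-1)!!$, and uses the map sending the order-two generators to the Cartan involution operators $\xi_i$ (your $s^{(i)}$, acting by $\pm1$ on $V_p$ and $V_q$) to produce the parameters $[n,p-q]$. The only organizational difference is that the paper proves injectivity of $\Phi$ and matches dimensions exactly, where you prove surjectivity and bound $\dim Br_{k,2}$ from above; the invariant-theoretic input you flag as the main obstacle is precisely the paper's Lemma \ref{homspacebrauer} on uneven Brauer diagrams (which the paper, too, states without addressing the extra $SO$-invariants that arise when $q$ is small compared to $k$).
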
 

Using this isomorphism and the classical double centraliser result we obtain a description of the $K$-types in the polynomial represention of $G$. 

\begin{theorem} For $G = O(p,q)$ or $Sp_{2n}(\mathbb{R})$ the $K$-types arising in the $r^{th}$ polynomial representations are in one-to-one correspondence with irreducible representation of the cyclotomic Brauer algebra $Br_{k,2}[\delta_0,\delta_1]$. \end{theorem}

For the case when $G= Sp_{2n}(\mathbb{R})$ we find a very nice decomposition of the cyclotomic Bruaer algebra into direct sums of matrix algebras tensored with walled Brauer algebras. This gives a Morita equivalence between this cyclotomic Brauer algebra and a direct sum of walled Brauer algebras. 
\begin{theorem} For $\delta_0=-n$ and $\delta_1= 0$ the cyclotomic Brauer algebra associated to the hyperoctahedral group $Br_{k,2}$ is isomorphic to: 

$$Br_{k,2}[-n,0] \cong \bigoplus_{s=0}^kM_{{k\choose s}\times {k \choose s}} WBr_{s, k-s}[-n].$$

\end{theorem}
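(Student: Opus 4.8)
The plan is to read the decomposition off from the Schur--Weyl isomorphism $\End_{U(n)}(V^{\otimes k}) \cong Br_{k,2}[-n,0]$ established above (the case $G = Sp_{2n}(\mathbb{R})$, $K = U(n)$), by computing the centraliser $\End_{U(n)}(V^{\otimes k})$ directly via the branching of $V$ to $U(n)$. As a $U(n)$-module the standard representation splits as $V \cong W \oplus W^{*}$, where $W = \mathbb{C}^{n}$ is the defining representation: a compatible complex structure exhibits $W$ as a Lagrangian subspace and identifies $\overline{W}$ with $W^{*}$ through the symplectic form $\omega$. Hence $V^{\otimes k} \cong \bigoplus_{S \subseteq \{1,\dots,k\}} V_{S}$, where $V_{S}$ carries a copy of $W$ in the tensor slots indexed by $S$ and a copy of $W^{*}$ in the remaining slots; these summands are exactly the joint eigenspaces of the operators corresponding to the level-two generators $y_{1},\dots,y_{k}$, so this splitting is compatible with the first theorem.

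Next I would record two elementary facts. Permuting tensor factors gives, for every $S$ with $|S| = s$, a $U(n)$-isomorphism $V_{S} \cong U_{s} := W^{\otimes s} \otimes (W^{*})^{\otimes (k-s)}$, so the $\binom{k}{s}$ modules $V_{S}$ with $|S|=s$ are mutually isomorphic; and the central circle $\{zI_{n}\} \le U(n)$ acts on $V_{S}$ by the scalar $z^{\,2|S|-k}$, whence $\Hom_{U(n)}(V_{S},V_{S'}) = 0$ whenever $|S| \neq |S'|$. Therefore $\End_{U(n)}(V^{\otimes k})$ is block diagonal indexed by $s = 0,\dots,k$, and after fixing isomorphisms $V_{S} \xrightarrow{\sim} U_{s}$ for all $S$ with $|S|=s$, the $s$-th block becomes the matrix algebra $M_{\binom{k}{s}\times\binom{k}{s}}\big(\End_{U(n)}(U_{s})\big)$, with composition of morphisms passing to matrix multiplication.

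It remains to identify $\End_{U(n)}(U_{s})$ with the walled Brauer algebra $WBr_{s,k-s}[-n]$. This is the classical fact that the walled Brauer algebra is the full centraliser of $U(n)$ (equivalently of $GL_{n}(\mathbb{C})$) on mixed tensor space: the natural map from the walled Brauer algebra is always surjective and is an isomorphism once $\dim W = n \ge s+(k-s) = k$, which is precisely the range in which the first theorem applies. The walled Brauer algebra has a single loop parameter because the only available $U(n)$-contraction on $U_{s}$ is the evaluation $W\otimes W^{*}\to\mathbb{C}$ pairing the two sides of the wall; in the normalisation of the first theorem this is the colour-$0$ contraction, whose closed loop carries the value $\delta_{0}=-n$ (essentially because such a loop runs through the off-diagonal block of the alternating form $\omega$ once in each order, contributing $-\dim W=-n$), while a $W\otimes W$ contraction would be the colour-$1$ one but vanishes since $W$ is isotropic, which is exactly why the right-hand side never involves $\delta_{1}=0$. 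Assembling the blocks gives $\End_{U(n)}(V^{\otimes k}) \cong \bigoplus_{s=0}^{k} M_{\binom{k}{s}\times\binom{k}{s}}\big(WBr_{s,k-s}[-n]\big)$, and combining with the first theorem proves the claim.

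The step I expect to be the main obstacle is bookkeeping of conventions rather than anything deep: pinning down the sign $-n$ (checking that the walled Brauer loop parameter inherited from $\omega$ is $-n$ and not $+n$), and verifying that the chosen isomorphisms $V_{S}\cong U_{s}$ genuinely turn composition in $\End_{U(n)}(V^{\otimes k})$ into matrix multiplication with the off-diagonal products between different $s$ vanishing; on the cyclotomic Brauer side the latter are exactly the diagram products that would create a colour-$1$ closed loop, consistently sent to $0$ by $\delta_{1}=0$. Finally, it is worth noting that one may instead argue combinatorially: match the basis of $\mathbb{Z}/2$-decorated Brauer diagrams on $k$ strands with the basis of $\bigoplus_{s}M_{\binom{k}{s}\times\binom{k}{s}}(WBr_{s,k-s}[-n])$ by splitting along the commuting idempotents $\tfrac{1}{2}(1\pm y_{i})$ --- whose joint eigenspace decomposition of $V^{\otimes k}$ recovers exactly $\bigoplus_{S}V_{S}$ --- and check the multiplication rules directly; this variant avoids the representation-theoretic input and removes the hypothesis $k\le n$ should the statement be wanted in full generality.
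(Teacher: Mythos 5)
Your proposal is correct and follows essentially the same route as the paper: decompose $V^{\otimes k}$ over $U(n)$ via $V\cong W\oplus W^*$ into $\binom{k}{s}$ copies of $W^{\otimes s}\otimes (W^*)^{\otimes k-s}$, observe the cross-blocks vanish, identify each diagonal block with a matrix algebra over a walled Brauer algebra by classical mixed-tensor Schur--Weyl duality, and transport the result through the isomorphism $Br_{k,2}[-n,0]\cong\End_{U(n)}(V^{\otimes k})$. Your extra care with the sign of the loop parameter is warranted, since the paper itself wavers between $WBr_{s,k-s}[n]$ and $WBr_{s,k-s}[-n]$ in different statements of this decomposition.
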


Unfortunately we weren't able to find a Morita equivalence for the groups $O(p,q)$ but we were able to find an interesting decomposition as a vector space in to Hom spaces, which can be described combinatorially using uneven Brauer diagrams..
\end{section} 

Section \ref{prelim} sets out the basics on Brauer algebras, cyclotomic Brauer algebras and real Lie groups and algebras. Section \ref{injection} proves that the cyclotomic Brauer algebra injects into $\End_K(V^{\otimes k})$ and Section \ref{surjection} proves the same map is a surjection, therefore proving the main results of the paper. 
Section \ref{conseq} looks at the Morita equivalence that is a consequence of the workings in Section \ref{surjection} and asks a few interesting questions about the cyclotomic Brauer algebra.

\begin{subsection}*{Acknowledgements} 


\end{subsection}
\begin{section}{Preliminaries}\label{prelim}

\begin{subsection}{Brauer Algebras}

\begin{definition}{\cite{B37}}\label{brauerpresentation} The rank $k$ Brauer algebra $Br_k[\delta]$, with parameter $\delta\in \mathbbm{k}$, is the associative unital $\mathbbm{k}$-algebra generated by elements $t_{i,i+1}$ and $e_{i,i+1}$ for $i =1,...,k-1$, subject to the conditions:
$$\text{ the subalgebra generated by } t_{i,i+1} \text{ is isomorphic to } \mathbbm{k}[S_k],$$
$$e_{i,i+1}^2 = \delta e_{i,i+1}, $$
$$t_{i,i+1} e_{i,i+1} = e_{i,i+1} t_{i,i+1} = e_{i,i+1},$$
$$t_{i,i+1}t_{i+1,i+2} e_{i,i+1}t_{i+1,i+2}t_{i,i+1} = e_{i+1,i+2},$$
$$[t_{i+1},e_{j,j+1}] = 0 \text{ for } j \neq i,i+1.$$
\end{definition}

The Brauer algebra is usually described as a diagram algebra, below is a map to the diagrams on $2k$ vertices which gives the Brauer algebra as a diagram algebra. Since we will explain the cyclotomic Brauer algebra as a diagram algebra we won't expand anymore. 
One defines an isomorphism on the generators

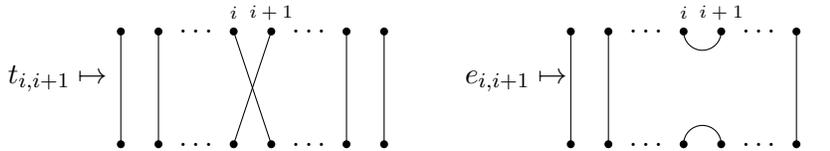
\begin{figure}[ht]
\centerline{
$t_{i,i+1}\mapsto$
\begin{minipage}{45mm}\begin{tikzpicture}[scale=0.5]
  \foreach \x in {0.5,1.5,3.5,4.5,6.5,7.5}
    {\fill (\x,3) circle (3pt);
     \fill (\x,0) circle (3pt);}
     \node[] at (2.5,0) {$\ldots$};
       \node[] at (5.5,0) {$\ldots$};
         \node[] at (2.5,3) {$\ldots$};
           \node[] at (5.5,3) {$\ldots$};
           \node[] at (3.5,3.5) {\tiny$i$};
           \node[] at (4.5,3.5) {\tiny$i+1$};
\begin{scope}
     \draw(0.5,3) -- (0.5,0);
     \draw(1.5,3) -- (1.5,0);
     \draw(4.5,3) -- (3.5,0);
     \draw(3.5,3) -- (4.5,0); 
     \draw(6.5,3) -- (6.5,0); 
     \draw(7.5,3) -- (7.5,0);
  \end{scope}    
\end{tikzpicture}\end{minipage}
$e_{i,i+1}\mapsto$\begin{minipage}{45mm}\begin{tikzpicture}[scale=0.5]
   \foreach \x in {0.5,1.5,3.5,4.5,6.5,7.5}
    {\fill (\x,3) circle (3pt);
     \fill (\x,0) circle (3pt);}
     \node[] at (2.5,0) {$\ldots$};
       \node[] at (5.5,0) {$\ldots$};
         \node[] at (2.5,3) {$\ldots$};
           \node[] at (5.5,3) {$\ldots$};
           \node[] at (3.5,3.5) {\tiny$i$};
           \node[] at (4.5,3.5) {\tiny$i+1$};
\begin{scope}
     \draw(0.5,3) -- (0.5,0);
     \draw(1.5,3) -- (1.5,0);
     \draw(6.5,3) -- (6.5,0); 
     \draw(7.5,3) -- (7.5,0);
     \draw(3.5,3) arc (180:360:0.5 and 0.5);
     \draw(3.5,0) arc (180:360:0.5 and -0.5); 
  \end{scope}    
\end{tikzpicture}\end{minipage}}
 \caption{Isomorphism exhibiting $Br_k(\delta)$ as a diagram algebra}
\label{xyelts}
\end{figure}

We now define the cyclotomic Brauer algebra introduced by H{\'a}ring-Oldenburg \cite{H01}.

\begin{definition} Let $k$ and $m$ be integers. Take any diagram in the Brauer algebra of rank $k$, label every edge by an element in $\{0,1,...,m-1\}$ and give the edge a direction. This is the set of labeled directed Brauer diagrams. \end{definition}
We define an equivalence class on  the set of labeled directed Brauer diagrams. Given a directed edge in the diagram with label $t$, if we change the direction of the edge and replace the label with $-t \mod m$ then we say that the resulting labeled directed Brauer diagram is equivalent to the original.

\begin{example} For $k = 6, m =3$, two examples of labeled directed Brauer diagrams: 

\begin{figure}[ht]
\centerline{
$x=$
\begin{minipage}{45mm}\begin{tikzpicture}[scale=0.5]
  \foreach \x in {0.5,1.5,...,5.5}
    {\fill (\x,3) circle (3pt);
     \fill (\x,0) circle (3pt);}
\begin{scope}[decoration={
    markings,
    mark=at position 0.5 with {\arrow{>}}
    }
    ]
     \draw[postaction={decorate}] (1.5,3) -- (2.5,0); 
    \draw[postaction={decorate}] (2.5,3) arc (180:360:0.5 and 0.5);
     \draw[postaction={decorate}] (0.5,0) arc (180:360:2.5 and -1.2);
  \end{scope}    
\begin{scope}[decoration={
    markings,
    mark= at position 0.2 with{\fill circle (1pt);},
    mark=at position 0.5 with {\arrow{>}}
    }
    ] 
    \draw[postaction={decorate}] (3.5,0) arc (180:360:0.5 and -0.5);    
    \draw[postaction={decorate}] (4.5,3) arc (180:360:0.5 and 0.5);            
  \end{scope}
\begin{scope}[decoration={
    markings,
    mark= at position 0.2 with{\fill circle (1pt);},
    mark=at position 0.5 with {\arrow{>}},
    mark= at position 0.35 with{\fill circle (1pt);}}
    ] 
     \draw[postaction={decorate}]  (0.5,3) -- (1.5,0);    
  \end{scope}
\end{tikzpicture}\end{minipage}
$y=$\begin{minipage}{45mm}\begin{tikzpicture}[scale=0.5]
  \foreach \x in {0.5,1.5,...,5.5}
    {\fill (\x,3) circle (3pt);
     \fill (\x,0) circle (3pt);}
\begin{scope}[decoration={
    markings,
    mark=at position 0.3 with {\arrow{>}}
    }
    ]
     \draw[postaction={decorate}] (1.5,3) -- (0.5,0);
  \end{scope}   
\begin{scope}[decoration={
    markings,
    mark=at position 0.5 with {\arrow{>}}
    }
    ]
     \draw[postaction={decorate}] (2.5,0) arc (180:360:1.5 and -1.2);
  \end{scope}    
\begin{scope}[decoration={
    markings,
    mark= at position 0.2 with{\fill circle (1pt);},
    mark=at position 0.5 with {\arrow{>}}
    }
    ] 
    \draw[postaction={decorate}] (3.5,0) arc (180:360:0.5 and -0.5);  
    \draw[postaction={decorate}] (2.5,3) arc (180:360:1.5 and 1.2);  
    \draw[postaction={decorate}] (3.5,3) arc (180:360:0.5 and 0.5);            
  \end{scope}
\begin{scope}[decoration={
    markings,
    mark= at position 0.2 with{\fill circle (1pt);},
    mark=at position 0.7 with {\arrow{>}},
    mark= at position 0.35 with{\fill circle (1pt);}}
    ] 
     \draw[postaction={decorate}]  (0.5,3) -- (1.5,0);    
  \end{scope}
\end{tikzpicture}\end{minipage}}
 \caption{Two elements in $B_{6,3}(\delta)$}
\label{xyelts2}
\end{figure}
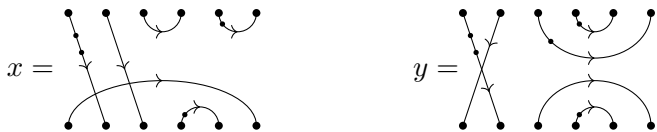

\end{example}
 
 To cocatenate two equivalence classes of labeled directed Brauer diagrams we arrange the directions such that all paths in the concatenated graph go in the same direction, we take the concatenated graph of these two diagrams and then we add the labels on all connected paths in the concatenated diagram. Finally for every closed loop of the diagram with label $i$ we multiply by a predetermined $\delta_i$. Let us highlight this with an example.

\begin{example}\label{multofdiagrams} To multiply $x$ and $y$ we rearrange the directions of $x$ and $y$ to give compatible directions of the concatenated diagram. In this example we need to change the direction of the arc from the the 1st to the 6th bottom vertices in $x$ and the 4th top vertex to the 5th one in $y$. For the arc  in $y$ we change the order and change the label from $1$ to $2 =-1 \mod 3$.

\begin{figure}[ht]
\centerline{
$x\circ y=$
\begin{minipage}{30mm}\begin{tikzpicture}[scale=0.5]
  \foreach \x in {0.5,1.5,...,5.5}
    {\fill (\x,6) circle (3pt);
     \fill (\x,3) circle (3pt);
     \fill (\x,0) circle (3pt);}
\begin{scope}[decoration={
    markings,
    mark=at position 0.3 with {\arrow{>}}
    }
    ]
    \draw[postaction={decorate}] (1.5,3) -- (0.5,0);
  \end{scope}  
\begin{scope}[decoration={
    markings,
    mark=at position 0.5 with {\arrow{>}}
    }
    ]
     \draw[postaction={decorate}] (1.5,6) -- (2.5,3); 
    \draw[postaction={decorate}] (2.5,6) arc (180:360:0.5 and 0.5);
     \draw[postaction={decorate}] (0.5,3) arc (180:360:2.5 and -1.2);
     \draw[postaction={decorate}] (2.5,0) arc (180:360:1.5 and -1.2);
  \end{scope}    
\begin{scope}[decoration={
    markings,
    mark= at position 0.2 with{\fill circle (1pt);},
    mark=at position 0.5 with {\arrow{>}}
    }
    ] 
    \draw[postaction={decorate}] (3.5,3) arc (180:360:0.5 and -0.5);    
    \draw[postaction={decorate}] (4.5,6) arc (180:360:0.5 and 0.5);
    \draw[postaction={decorate}] (3.5,0) arc (180:360:0.5 and -0.5);    
    \draw[postaction={decorate}] (3.5,3) arc (180:360:0.5 and 0.5);
    \draw[postaction={decorate}] (2.5,3) arc (180:360:1.5 and 1.2);              
  \end{scope}
\begin{scope}[decoration={
    markings,
    mark= at position 0.2 with{\fill circle (1pt);},
    mark=at position 0.5 with {\arrow{>}},
    mark= at position 0.35 with{\fill circle (1pt);}}
    ] 
     \draw[postaction={decorate}]  (0.5,6) -- (1.5,3);
     \draw[postaction={decorate}]  (0.5,3) -- (1.5,0);      
  \end{scope}
\end{tikzpicture}\end{minipage}
$=$
\begin{minipage}{30mm}\begin{tikzpicture}[scale=0.5]
  \foreach \x in {0.5,1.5,...,5.5}
    {\fill (\x,6) circle (3pt);
     \fill (\x,3) circle (3pt);
     \fill (\x,0) circle (3pt);}
\begin{scope}[decoration={
    markings,
    mark=at position 0.3 with {\arrow{>}}
    }
    ]
    \draw[postaction={decorate}] (1.5,3) -- (0.5,0);
  \end{scope}  
\begin{scope}[decoration={
    markings,
    mark=at position 0.5 with {\arrow{>}}
    }
    ]
     \draw[postaction={decorate}] (1.5,6) -- (2.5,3); 
    \draw[postaction={decorate}] (2.5,6) arc (180:360:0.5 and 0.5);
     \draw[postaction={decorate}] (5.5,3) arc (180:360:-2.5 and -1.2);
     \draw[postaction={decorate}] (2.5,0) arc (180:360:1.5 and -1.2);
  \end{scope}    
\begin{scope}[decoration={
    markings,
    mark= at position 0.2 with{\fill circle (1pt);},
    mark=at position 0.5 with {\arrow{>}}
    }
    ] 
    \draw[postaction={decorate}] (3.5,3) arc (180:360:0.5 and -0.5);    
    \draw[postaction={decorate}] (4.5,6) arc (180:360:0.5 and 0.5);
    \draw[postaction={decorate}] (3.5,0) arc (180:360:0.5 and -0.5);
       \draw[postaction={decorate}] (2.5,3) arc (180:360:1.5 and 1.2);    
  \end{scope}
\begin{scope}[decoration={
    markings,
    mark= at position 0.2 with{\fill circle (1pt);},
    mark=at position 0.5 with {\arrow{>}},
    mark= at position 0.35 with{\fill circle (1pt);}}
    ] 
     \draw[postaction={decorate}]  (0.5,6) -- (1.5,3);
     \draw[postaction={decorate}]  (0.5,3) -- (1.5,0);      
    \draw[postaction={decorate}] (4.5,3) arc (180:360:-0.5 and 0.5);    
  \end{scope}
\end{tikzpicture}\end{minipage}
$=\delta_0^1$\begin{minipage}{45mm}\begin{tikzpicture}[scale=0.5]
  \foreach \x in {0.5,1.5,...,5.5}
    {\fill (\x,3) circle (3pt);
     \fill (\x,0) circle (3pt);}
\begin{scope}[decoration={
    markings,
    mark=at position 0.3 with {\arrow{>}}
    }
    ]
  \end{scope}   
\begin{scope}[decoration={
    markings,
    mark=at position 0.5 with {\arrow{>}}
    }
    ]
     \draw[postaction={decorate}] (1.5,3) -- (1.5,0);
    \draw[postaction={decorate}] (2.5,3) arc (180:360:0.5 and 0.5);
     \draw[postaction={decorate}] (2.5,0) arc (180:360:1.5 and -1.2);
  \end{scope}    
\begin{scope}[decoration={
    markings,
    mark= at position 0.2 with{\fill circle (1pt);},
    mark=at position 0.5 with {\arrow{>}}
    }
    ] 
    \draw[postaction={decorate}] (3.5,0) arc (180:360:0.5 and -0.5);    
    \draw[postaction={decorate}] (4.5,3) arc (180:360:0.5 and 0.5);            
  \end{scope}
\begin{scope}[decoration={
    markings,
    mark= at position 0.2 with{\fill circle (1pt);},
    mark=at position 0.7 with {\arrow{>}},
    mark= at position 0.35 with{\fill circle (1pt);}}
    ] 
     \draw[postaction={decorate}]  (0.5,3) -- (0.5,0);    
  \end{scope}
\end{tikzpicture}\end{minipage}}
 \caption{Multiplying two elements in $B_{6,3}(\delta)$}
\label{xyelts3}
\end{figure}
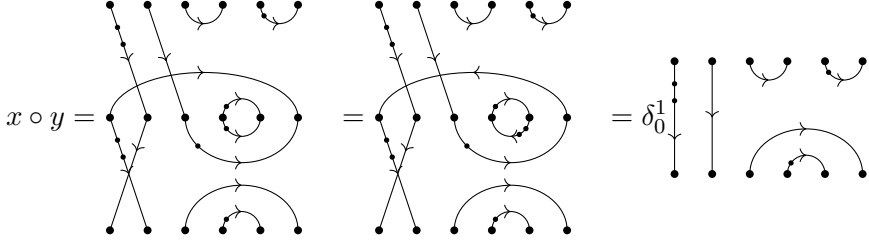

\end{example}
 
\end{subsection}

\begin{definition}\cite{H01}\label{cyclotomicbrauerdef} Let $k$ and $m$ be integers, introduce free variables $\delta= (\delta_0,..,\delta_{m-1})$ such that $\delta_i = \delta_{-i\mod m}$. The cyclotomic Brauer algebra $B_{k,m}(\delta)$ is a free module over $\mathbb{C}[\delta_0,...,\delta_{\lfloor \frac{m}{2}\rfloor}]$ with basis consisting of the equivalence classes of labeled directed Brauer diagrams, multiplication is defined by the multiplication  explained in Example \ref{multofdiagrams}.\end{definition}

We give a presentation for the Cyclotomic Brauer algebra. 

\begin{definition}\label{cyclotomicbrauerpresentation} The cyclotomic Brauer algebra $Br_{k,m}[\delta]$, with parameters $\delta$, is the associative unital $\mathbbm{k}[\delta_0,...,\delta_{\lfloor\frac{m}{2}\rfloor}]$-algebra generated by elements $t_{i,i+1}$ and $e_{i,i+1}$ for $i =1,...,k-1$ and $\theta_j$ for $j=1,...,k$, subject to the conditions:
$$\text{ the subalgebra generated by } t_{i,i+1},\theta_j \text{ is isomorphic to } \mathbb{C}[S_k \rtimes (\mathbb{Z}/m\mathbb{Z})^k],$$
$$\text{ the subalgebra generated by } t_{i,i+1},e_{i,i+1} \text{ is isomorphic to } Br[\delta_0],$$

$$e_{i1,i+1}\theta_i^l e_{i,i+1} = \delta_l e_{i,i+1}$$

$$\theta_i^l \theta_{i+1}^{m-l}  e_{i,i+1} = e_{i,i+1} \theta_i^l \theta_{i+1}^{m-l}= e_{i,i+1},$$
$$[\theta_i,e_{j,j+1}] = 0 \text{ for } j \neq i,i-1.$$
\end{definition}

Throughout this paper the ground field $\mathbbm{k}$ will be fixed as $\mathbb{C}$.

\begin{figure}[ht]
The following figure defines an isomorphism between the two definition we have provided. 
\centerline{
$t_{i,i+1}\mapsto$
\begin{minipage}{45mm}\begin{tikzpicture}[scale=0.5]
  \foreach \x in {0.5,1.5,3.5,4.5,6.5,7.5}
    {\fill (\x,3) circle (3pt);
     \fill (\x,0) circle (3pt);}
     \node[] at (2.5,0) {$\ldots$};
       \node[] at (5.5,0) {$\ldots$};
         \node[] at (2.5,3) {$\ldots$};
           \node[] at (5.5,3) {$\ldots$};
           \node[] at (3.5,3.5) {\tiny$i$};
           \node[] at (4.5,3.5) {\tiny$i+1$};
\begin{scope}[decoration={
    markings,
    mark=at position 0.4 with {\arrow{>}}
    }
    ]
     \draw[postaction={decorate}] (0.5,3) -- (0.5,0);
     \draw[postaction={decorate}] (1.5,3) -- (1.5,0);
     \draw[postaction={decorate}] (4.5,3) -- (3.5,0);
     \draw[postaction={decorate}] (3.5,3) -- (4.5,0); 
     \draw[postaction={decorate}] (6.5,3) -- (6.5,0); 
     \draw[postaction={decorate}] (7.5,3) -- (7.5,0);
  \end{scope}    
\end{tikzpicture}\end{minipage}
$e_{i,i+1}\mapsto$\begin{minipage}{45mm}\begin{tikzpicture}[scale=0.5]
   \foreach \x in {0.5,1.5,3.5,4.5,6.5,7.5}
    {\fill (\x,3) circle (3pt);
     \fill (\x,0) circle (3pt);}
     \node[] at (2.5,0) {$\ldots$};
       \node[] at (5.5,0) {$\ldots$};
         \node[] at (2.5,3) {$\ldots$};
           \node[] at (5.5,3) {$\ldots$};
           \node[] at (3.5,3.5) {\tiny$i$};
           \node[] at (4.5,3.5) {\tiny$i+1$};
\begin{scope}[decoration={
    markings,
    mark=at position 0.5 with {\arrow{>}}
    }
    ]
     \draw[postaction={decorate}] (0.5,3) -- (0.5,0);
     \draw[postaction={decorate}] (1.5,3) -- (1.5,0);
     \draw[postaction={decorate}] (6.5,3) -- (6.5,0); 
     \draw[postaction={decorate}] (7.5,3) -- (7.5,0);
     \draw[postaction={decorate}] (3.5,3) arc (180:360:0.5 and 0.5);
     \draw[postaction={decorate}] (3.5,0) arc (180:360:0.5 and -0.5); 
  \end{scope}    
\end{tikzpicture}\end{minipage}}
\centerline{
$\theta_i\mapsto$
\begin{minipage}{45mm}\begin{tikzpicture}[scale=0.5]
  \foreach \x in {0.5,1.5,3.5,5.5,6.5}
    {\fill (\x,3) circle (3pt);
     \fill (\x,0) circle (3pt);}
     \node[] at (2.5,0) {$\ldots$};
       \node[] at (4.5,0) {$\ldots$};
         \node[] at (2.5,3) {$\ldots$};
           \node[] at (4.5,3) {$\ldots$};
           \node[] at (3.5,3.5) {\tiny$i$};
\begin{scope}[decoration={
    markings,
    mark=at position 0.5 with {\arrow{>}}
    }
    ]
     \draw[postaction={decorate}] (0.5,3) -- (0.5,0);
     \draw[postaction={decorate}] (1.5,3) -- (1.5,0);
     \draw[postaction={decorate}] (5.5,3) -- (5.5,0); 
     \draw[postaction={decorate}] (6.5,3) -- (6.5,0);
  \end{scope}      
\begin{scope}[decoration={
    markings,
    mark= at position 0.2 with{\fill circle (1pt);},
    mark=at position 0.5 with {\arrow{>}}
    }
    ]
     \draw[postaction={decorate}] (3.5,3) -- (3.5,0);   
  \end{scope}
\end{tikzpicture}\end{minipage}}
 \caption{Isomorphism exhibiting $Br_{k,m}(\delta)$ as a diagram algebra}
\end{figure}
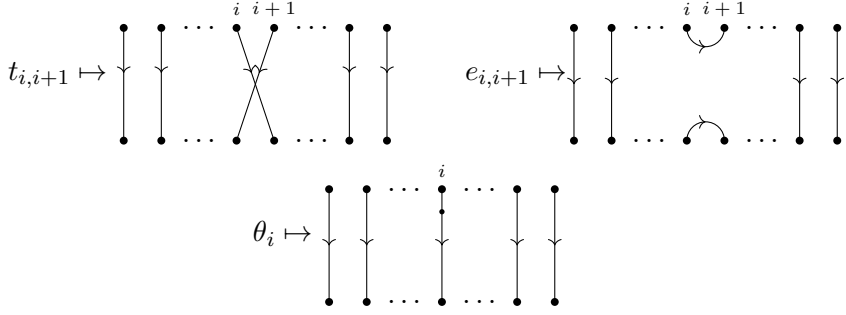

\begin{subsection}{Real Lie Groups and Lie algebras}

Throughout this paper we fix the following notation.
Let $G$ be $O(p,q)$, $p+q=2n+1$ or $Sp_{2n}(\mathbb{R})$. Let $\mathfrak{g}_0$ be $Lie(G)$, with complexification $\mathfrak{g}= \mathfrak{g}_0\otimes_\mathbb{R} \mathbb{C}$. We uniformly denote a real Lie algebra by $\mathfrak{g}_0$, for a complex Lie algebra we drop the subscript. We fix a Cartan involution $\theta$ of $\mathfrak{g}_0$ and extend to $\mathfrak{g}$, let $\Theta$ be the corresponding involution of $G$. A maximal compact subgroup of $G$ is $K$, the fixed space of $\Theta$. The Lie algebra $\mathfrak{g}_0$ decomposes as $\mathfrak{k}_0 \oplus \mathfrak{p}_0$. The subspace  $\mathfrak{p}_0$ is the $-1$ eigenspace of $\theta$, the subalgebra $\mathfrak{k}_0$ is the $+1$ eigenspace of $\theta$ and the Lie algebra of $K$. Similarly, $\mathfrak{g} = \mathfrak{k} \oplus \mathfrak{p}$.  Let $\mathfrak{a}_0$ be a maximal commutative Lie subalgebra of $\mathfrak{p}_0$. 
Let $M$ be the centralizer of $\mathfrak{a}_0$ in $K$ under the adjoint action. We have $\Lie(M) =\mathfrak{m}_0$. 

\begin{definition} For $G$ equal to $O(p,q)$ or  $Sp_{2n}(\mathbb{R})$ write $V_0$ for the defining matrix module. That is $\rho : G \to GL(V_0)$ is the injection defining $G$ as a linear group. Write $V = V_0\otimes_\mathbb{R} \mathbb{C}$ for the complexification of $V_0$. 
\end{definition} 
If $G=Sp_{2n}(\mathbb{R})$ then $V=\mathbb{C}^{2n}$ and if $G =  O(p,q)$ then $V = \mathbb{C}^{2n+1}$. 
When $G = Sp_{2n}(\mathbb{R})$, let $e_1,...,e_{2n}$  be the standard matrix basis of $V$, then define a new basis $f_i = e_i + e_{n+i}$ for $i = 1,..,n$ and $f_i' = e_i - e_{n+i}$  for $i = 1,..,n$. We also label $f_i$ by $f_i^1$ and $f_i'$ by $f_i^{-1}$. When $G = O(p,q)$ then $V$ has basis $e_1,...,e_{2n+1}$, we let $f_i = e_{p-i+1} + e_{p+i}$ and $f_i' = e_{p-i+1} - e_{p+i}$.

\begin{definition}\label{casimir} Given a finite dimensional complex Lie algebra $\mathfrak{g}$ with basis $B$ and dual basis $B^*$ with respect to the Killing form, we define the Casimir element in the enveloping algebra $U(\mathfrak{g})$ to be 
$$C^\mathfrak{g} =\sum_{b\in B} bb^* \in U(\mathfrak{g}).$$
For a subalgebra $\mathfrak{h} \subset \mathfrak{g}$ we denote the Casimir element of $\mathfrak{h}$ in $\mathfrak{g}$ by $C^\mathfrak{h} = \sum_{b \in B \cap \mathfrak{h}} b b^*$ where the basis $B$ is taken such that $B \cap \mathfrak{h}$ is a basis of $\mathfrak{h}$.

Similarly we define $\Omega_g \in U(\mathfrak{g}) \otimes U(\mathfrak{g})$ by 
$$\Omega_{\mathfrak{g}} = \sum_{b \in B} b \otimes b^*.$$ 
\end{definition}
\end{subsection}
\end{section} 

\begin{section}{Extending the Brauer algebra by the Cartan involution}\label{injection}

In this section we take the algebra $\End_\mathfrak{g}(V^{\otimes k})$ and relax the commutation condition. We want to study the larger algebra $\End_\mathfrak{k}(V^{\otimes k})$. We extend by the Cartan involution and show that the resulting algebra is the cyclotomic Brauer algebra. In the following section we show that the cyclotomic Brauer algebra is the full centraliser of $\mathfrak{k}$.

\begin{lemma}\label{decompofVtensorV}  If $\mathfrak{g} = \mathfrak{sp}_{2n}$ or $\mathfrak{so}_{2n+1}$ then $V \otimes V$ decomposes as 
$$\Lambda^2 V \oplus S^2V/\mathbbm{1} \oplus \mathbbm{1} \text{ for } \mathfrak{so}_{2n+1},$$
$$\Lambda^2 V / \mathbbm{1} \oplus S^2 V \oplus \mathbbm{1} \text{ for } \mathfrak{sp}_{2n}.$$
Here $\mathbbm{1}$ denotes the trivial module of $\mathfrak{g}$.
For $SO_{2n+1}$ the trivial module $\mathbbm{1}$ is the span of $v_\mathbbm{1} =\sum_{i=1}^n{e_i \otimes e_i}$ for $Sp_{2n}(\mathbb{R})$then $\mathbbm{1}$ is the span of $v_\mathbbm{1}=\sum_{i=1}^n f_i \otimes f_i' - f_i' \otimes f_i$.
\end{lemma}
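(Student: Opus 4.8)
The plan is to deduce both decompositions from the classical structure of the tensor square of the standard module of a complex classical Lie algebra, and then to recognise the trivial summand explicitly in the chosen bases.

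First I would record that $V$ carries a nondegenerate $\mathfrak{g}$-invariant bilinear form: symmetric when $\mathfrak{g}=\mathfrak{so}_{2n+1}$, alternating when $\mathfrak{g}=\mathfrak{sp}_{2n}$, since by definition $\mathfrak{g}$ consists of the endomorphisms skew with respect to that form. Invariance of the form is equivalent to invariance of the corresponding "coform" $\omega\in V\otimes V$ coming from the isomorphism $V\cong V^*$ it induces, and $\omega\in S^2V$ in the orthogonal case while $\omega\in\Lambda^2V$ in the symplectic case (because the Gram matrix and its inverse share the (anti)symmetry of the form). A short coordinate computation then identifies $\mathbb{C}\omega$ with the spans claimed for $\mathbbm{1}$: for $\mathfrak{sp}_{2n}$ one checks $\langle f_i,f_j'\rangle=-2\delta_{ij}$ and $\langle f_i,f_j\rangle=\langle f_i',f_j'\rangle=0$, so $\sum_{i=1}^n(f_i\otimes f_i'-f_i'\otimes f_i)$ is a nonzero scalar multiple of the symplectic coform and hence $\mathfrak{g}$-invariant; for $\mathfrak{so}_{2n+1}$ it is the analogous (and easier) identification of $v_{\mathbbm{1}}$ with the coform of the defining symmetric form.

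Next, since we work over $\mathbb{C}$ and $\mathfrak{g}$ acts by derivations, $V\otimes V=S^2V\oplus\Lambda^2V$ as $\mathfrak{g}$-modules, and I would invoke the standard identifications of the two summands. For $\mathfrak{so}_{2n+1}$ the adjoint module is $\Lambda^2V$ (the space of form-skew matrices), which is irreducible because $\mathfrak{so}_{2n+1}$ is simple; and $S^2V$ contains the line $\mathbb{C}v_{\mathbbm{1}}$, whose $\mathfrak{g}$-stable complement $S^2V/\mathbbm{1}$ is the irreducible module of highest weight $2\omega_1$. Dually, for $\mathfrak{sp}_{2n}$ the adjoint module is $S^2V$, irreducible, and $\Lambda^2V\supset\mathbb{C}v_{\mathbbm{1}}$ with $\Lambda^2V/\mathbbm{1}$ irreducible of highest weight $\omega_2$. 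Assembling these, and using semisimplicity to split off the trivial submodule so that the quotients really appear as direct summands, yields exactly the two asserted decompositions.

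The only genuinely substantive points are the two irreducibility claims for $S^2V/\mathbbm{1}$ and $\Lambda^2V/\mathbbm{1}$; these are classical, and I would either quote them from a standard reference on representations of the classical groups or give a quick self-contained argument via highest weight theory (showing that the cyclic $U(\mathfrak{g})$-submodule generated by a highest weight vector already exhausts the complement, e.g. by a weight-multiplicity count). Everything else — self-duality of $V$, the $S^2\oplus\Lambda^2$ splitting in characteristic zero, and the coordinate identification of $v_{\mathbbm{1}}$ — is routine bookkeeping.
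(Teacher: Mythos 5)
Your argument is correct and complete; the paper itself states this lemma without proof, treating it as the classical decomposition of the tensor square of the standard module (symmetric/alternating splitting, the adjoint summand being irreducible, and the invariant line spanned by the coform of the defining bilinear form), which is exactly what you supply. One small point your coform identification surfaces: for $\mathfrak{so}_{2n+1}$ the invariant vector should be the full coform $\sum_{i=1}^{2n+1} e_i\otimes e_i$ (or its expression in the $f_i,f_i'$ basis), so the upper limit $n$ in the paper's formula for $v_{\mathbbm{1}}$ appears to be a typo rather than an error in your proof.
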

Let $s_{12}$ be the operator that has $1$-eigenspace $S^2V$ and $-1$ eigenspace $\Lambda^2 V$, alternatively $s_{12}$ swaps the tensor order in $V \otimes V$. Similar let $s_{i,i+1}$ swap the $i^{th}$ and ${i+1}^{st}$ tensors in $V^{\otimes k}$.
Let $\pr_1$ be the projection of $V\otimes V$ onto the trivial submodule $\mathbbm{1}$ in the decomposition above. Let $\pr_{i,i+1}$ be the projection onto the trivial submodule of $V_i \otimes V_{i+1}$. 

\begin{theorem}{\cite[Theorem 46]{B37}}\label{Brauarcentraliser} Let $\rho$ denote the action of $SO_{2n+1}(\mathbb{C})$ (resp. $Sp_{2n}(\mathbb{C})$) on $V^{\otimes k}$. Define $\pi$ from $Br_k[n]$ (resp. $Br_k[-n]$) to $\End V^{\otimes k}$ by:
$$\pi(t_{i,i+1}) = s_{i,i+1} \in \End(V^{\otimes k}),$$
$$\pi(e_{i,i+1}) =  n pr_{i,i+1} (\text{resp. } -n pr_{i,i+1}) \in \End(V^{\otimes k}$$.

We have the following equalities: 

$$\begin{array}{lcr}
\rho(G) & = & \End_{Br_k} (V^{\otimes r}),\\
\pi(Br_k) & = & \End_{G} (V^{\otimes r}).\\
\end{array} 
$$
Furthermore if $k \leq n$ then the representation $\pi$ is faithful. \end{theorem}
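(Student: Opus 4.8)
The plan is to establish the two centraliser equalities first and the faithfulness last. \emph{Step 1 ($\pi$ is a homomorphism into the centraliser).} I would verify that the operators $\pi(t_{i,i+1}) = s_{i,i+1}$ and $\pi(e_{i,i+1}) = \pm n\,\pr_{i,i+1}$ satisfy the defining relations of Definition \ref{brauerpresentation}. The $S_k$-relations among the $s_{i,i+1}$ hold because $s_{i,i+1}$ is the genuine transposition of tensor slots. The relation $e_{i,i+1}^2 = \delta e_{i,i+1}$ holds automatically with $\delta = \pm n$, since $\pr_{i,i+1}$ is idempotent (it is the projection onto the one-dimensional trivial submodule $\mathbbm{1}$ of $V_i \otimes V_{i+1}$) and $\pi(e_{i,i+1})$ is $\pm n$ times this idempotent; the remaining mixed relations ($t_{i,i+1}e_{i,i+1} = e_{i,i+1}$, the braid-type relation, and the commutations) are a finite list of identities checked directly on the explicit invariant vector $v_\mathbbm{1}$ of Lemma \ref{decompofVtensorV}. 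Each $s_{i,i+1}$ commutes with the diagonal $G$-action and each $\pr_{i,i+1}$ is $G$-equivariant because $\mathbbm{1}$ is a $G$-submodule, so $\pi(Br_k) \subseteq \End_G(V^{\otimes k})$; dually the $S_k$-action gives $\rho(G) \subseteq \End_{Br_k}(V^{\otimes k})$.

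\emph{Step 2 (surjectivity onto $\End_G(V^{\otimes k})$).} This is the heart of the argument. Using the nondegenerate $G$-invariant form on $V$ to identify $V \cong V^*$ as $G$-modules, I would rewrite
$$\End_G(V^{\otimes k}) \cong \left(V^{\otimes k} \otimes (V^*)^{\otimes k}\right)^G \cong \left(V^{\otimes 2k}\right)^G.$$
The First Fundamental Theorem of classical invariant theory for $O(N)$ and $Sp(N)$ asserts that $(V^{\otimes 2k})^G$ is spanned by the complete contractions of the $2k$ tensor slots using the invariant form. Each such contraction is a pairing of the $2k$ slots, that is, a Brauer diagram on $2k$ vertices, and tracing the identifications back shows that the operator it produces is precisely the image under $\pi$ of that diagram. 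Hence $\End_G(V^{\otimes k}) = \pi(Br_k)$. In the odd orthogonal case I would additionally note that $SO(2n+1)$ and $O(2n+1)$ have the same invariants in this degree: the only invariant distinguishing them is built from the Levi-Civita tensor lying in $V^{\otimes(2n+1)}$, which cannot contribute to $V^{\otimes 2k}$ once $2k \le 2n < 2n+1$.

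\emph{Step 3 (first equality and faithfulness).} Since $G$ is reductive, $V^{\otimes k}$ is a semisimple $G$-module, so the associative algebra generated by $\rho(G)$ in $\End(V^{\otimes k})$ is semisimple and the double commutant theorem applies. Its centraliser is $\End_G(V^{\otimes k})$, which by Step 2 equals $\pi(Br_k)$, and taking centralisers a second time returns the algebra itself: $\rho(G) = \End_{\pi(Br_k)}(V^{\otimes k}) = \End_{Br_k}(V^{\otimes k})$. For faithfulness when $k \le n$ it suffices to show $\dim \pi(Br_k) = \dim Br_k = (2k-1)!!$, equivalently that the contraction operators attached to distinct Brauer diagrams are linearly independent. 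This holds in the stable range: for $Sp_{2n}$ one has $\dim V = 2n \ge 2k$ and for $SO_{2n+1}$ one has $\dim V = 2n+1 > 2k$, and in this range the perfect-matching contractions spanning $(V^{\otimes 2k})^G$ satisfy no linear relations.

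I expect Step 2 to be the main obstacle, as it is where the genuine invariant theory of $O(N)$ and $Sp(N)$ (the First Fundamental Theorem) is required; Step 1 is a finite verification, Step 3 is a standard application of the double commutant theorem, and the linear independence needed for faithfulness is the easier ``no relations in the stable range'' half of the same invariant theory.
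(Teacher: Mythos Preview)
The paper does not prove this theorem at all: it is quoted as Brauer's classical 1937 result and used as a black box (``The above theorem is Brauer's celebrated result. We use this as a base\ldots''). So there is no proof in the paper to compare against.

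That said, your outline is the standard modern proof and is essentially correct. One small simplification in Step~2: for the odd orthogonal group you do not need the restriction $2k\le 2n$ to pass from $O(2n+1)$ to $SO(2n+1)$. Since $2n+1$ is odd, $O(2n+1)=SO(2n+1)\times\{\pm I\}$, and $-I$ acts on $V^{\otimes 2k}$ by $(-1)^{2k}=1$; hence $(V^{\otimes 2k})^{SO(2n+1)}=(V^{\otimes 2k})^{O(2n+1)}$ for \emph{every} $k$, and the First Fundamental Theorem for $O$ gives the surjectivity without further argument. Also note that in Step~3 the equality $\rho(G)=\End_{Br_k}(V^{\otimes k})$ should be read, as is customary, as an equality of the algebra spanned by $\rho(G)$ with the commutant; the double commutant theorem yields exactly that.
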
 

The above theorem is Brauer's celebrated result. We use this as a base and extend the Brauer algebra to obtain a larger algebra that commutes with a maximal compact subalgebra $K$ of $G$ or equivalently $\mathfrak{k}\subset \mathfrak{g}$.

\begin{definition} Given the action of $U(\mathfrak{g})^{\otimes k}$ of $\End(V)^{\otimes k}$ on $ V^{\otimes k}$ we write $(X)_i$ for the action of $X$ on the $i^{th}$ tensor in $U(\mathfrak{g})^{\otimes k}$ or $\End(V)^{\otimes k}$,
$$(X)_i = \overbrace{ id \otimes ...\otimes id }^{i -1 \text{ times}} \otimes X \otimes\overbrace{id\otimes ...\otimes id}^{k-i \text{ times}}.$$
\end{definition}

Recall the Cartan decomposition, $\mathfrak{g} = \mathfrak{k} \oplus \mathfrak{p}$ there is a Cartan involution $\theta$ that exhibits this decomposition. For $G$ equal to $Sp_{2n}(\mathbb{R})$ or $SO(p,q)$ there is an involutive element $\xi \in G$ such that $\theta (\mathfrak{g}) = \xi^{-1} \mathfrak{g} \xi$. 
The matrix $\xi$ defines an element in $\Aut(\mathfrak{g})$ and since $\theta$ is the identity on $\mathfrak{k}$ then $\xi^{-1} X - X \xi = 0$ for all $X \in \mathfrak{k}$. The element $\xi$ acts on the standard representation $V$. We define operator $\xi_i$ on $V^{\otimes k}$: 
$$\xi_i = Id^{\otimes i-1}\otimes \xi \otimes Id^{\otimes k-i} \in \End (V^{\otimes k}).$$
\begin{remark}\label{xiaction}
If $G = SO(p,q)$ then $$\xi(e_i) = \begin{cases}e_i & \text{ if } i \leq p,\\ -e_i &\text{ if } i > p.\end{cases}$$ 
If $G=Sp_{2n}(\mathbb{R})$using the basis $\{f_i, f_i'\}$ of $V$ then 
$$\xi(f_i) = \sqrt{-1}f_i', $$ $$\xi(f_i') = -\sqrt{-1}f_i.$$
\end{remark}
\begin{lemma}\label{xicommutes} Let $G$ be $Sp_{2n}(\mathbb{R})$ or $SO(p,q)$ and $(\xi_i) \in \End(V^{\otimes k})$  defined above then the operator $\xi_i$ commutes with the maximal compact subgroup $K$ (resp. $\mathfrak{k})$.
\end{lemma}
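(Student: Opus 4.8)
The plan is to reduce the statement to a single tensor factor and then quote the fact that conjugation by $\xi$ implements the Cartan involution. The point is that $K$ acts on $V^{\otimes k}$ diagonally, $k\mapsto\rho(k)^{\otimes k}$, and $\mathfrak{k}$ acts by the Leibniz rule, $X\mapsto\sum_{j=1}^{k}(X)_j$, whereas $\xi_i=(\xi)_i$ is supported in the $i$-th slot only. For $j\neq i$ the operators $(\xi)_i$ and $(X)_j$ (resp.\ $(\xi)_i$ and $(\rho(k))_j$) act on disjoint tensor factors and hence commute for free, so
$$\bigl[\xi_i,\ {\textstyle\sum_{j=1}^{k}}(X)_j\bigr]=\bigl([\xi,X]\bigr)_i,$$
and likewise $\xi_i$ commutes with $\rho(k)^{\otimes k}$ precisely when $\xi$ commutes with $\rho(k)$ as operators on $V$. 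Thus everything comes down to the single-slot claim: $\xi$ commutes on $V$ with every $X\in\mathfrak{k}$ and with every $\rho(k)$, $k\in K$.

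For the Lie-algebra part this is exactly the relation recorded just before the lemma: $\xi$ is an involution with $\Ad(\xi)=\theta$ on $\mathfrak{g}$, and $\theta$ is the identity on $\mathfrak{k}$, so $\xi X\xi^{-1}=\theta(X)=X$ for all $X\in\mathfrak{k}$. For the group part, conjugation by $\xi$ is the global Cartan involution $\Theta$, of which $K$ is by definition the fixed-point subgroup, so $\xi\rho(k)\xi^{-1}=\rho(\Theta(k))=\rho(k)$ for all $k\in K$. Substituting these into the displayed identity (and its group analogue) gives $[\xi_i,\mathfrak{k}]=0$ and $[\xi_i,K]=0$ in $\End(V^{\otimes k})$, as claimed.

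As a cross-check — and this is the only place a short computation is needed — I would verify the single-slot commutation directly from Remark \ref{xiaction}. For $G=SO(p,q)$, $\xi$ acts as $+1$ on $\langle e_1,\dots,e_p\rangle$ and as $-1$ on $\langle e_{p+1},\dots,e_{p+q}\rangle$; both subspaces are stable under $K=SO(p)\times SO(q)$ and under $\mathfrak{k}=\mathfrak{so}(p)\oplus\mathfrak{so}(q)$, so $\xi$ commutes with all of them. For $G=Sp_{2n}(\mathbb{R})$, passing from the $\{f_i,f_i'\}$-description of $\xi$ to the standard basis shows that $\xi$ is the matrix $\begin{bmatrix}0 & \sqrt{-1}\,\Id_n\\ -\sqrt{-1}\,\Id_n & 0\end{bmatrix}$, and a one-line multiplication confirms that this commutes with every block matrix $\begin{bmatrix}A & B\\ -B & A\end{bmatrix}$, which is the shape both of $\mathfrak{k}=\mathfrak{gl}_n(\mathbb{C})$ and of the image of $U(n)$ inside $\End(V)$. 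I do not anticipate a genuine obstacle; the only subtlety worth flagging is that for the symplectic group $\xi$ has complex entries and so lies in $Sp_{2n}(\mathbb{C})$ rather than in $G$ itself, but as all the identities above live in $\End(V)$ with $V=V_0\otimes_{\mathbb{R}}\mathbb{C}$, this causes no difficulty.
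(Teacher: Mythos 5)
Your proof is correct and follows essentially the same route as the paper: reduce to a single tensor slot via the Leibniz-rule action of $\mathfrak{k}$, then invoke the fact that $\mathrm{Ad}(\xi)=\theta$ fixes $\mathfrak{k}$ pointwise. The only difference is that you actually substantiate the single-slot commutation (the paper's proof defers it with a citation that points back to the lemma itself, relying on the unproved remark preceding it), and your explicit matrix check for $Sp_{2n}(\mathbb{R})$ and the group-level statement for $K$ are welcome additions rather than deviations.
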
 

\begin{proof} 

It is sufficient to show that $\xi_i$ commutes with $\mathfrak{k}$. Let $X \in \mathfrak{k}$ then then the action of $X$ on $V^{\otimes k}$ is 
$$\rho^k(X) = \sum_{j=1}^n \rho(X)_j = \sum_{j=1}^n (X)_j,$$
then the commutator of $X$ and $\xi_i$ in $\End(V^{\otimes k})$ is 
$$[X, \xi_i] = [\sum_{j=1}^n (X)_j, \xi_i] = \sum_{j=1}[X_j,\xi_i] = \sum_{j\neq i} [X, Id]_j +[X,\xi] = 0.$$
The last equality follows from the fact that $[X,\xi] = 0$ as shown in Lemma \ref{xicommutes}.
\end{proof}

Since $\xi_i$ commutes with $\mathfrak{k}$ we can extend the Brauer algebra $Br_k \subset \End(V^{\otimes k})$ by the elements $(\xi_i)$ in $\End(V^{\otimes k})$. 

\begin{theorem}\label{cyclbrauerincentraliser} Let $G = Sp_{2n}(\mathbb{R})$or $SO(p,q)$, $p >q$. For $k \leq\rank(G)$ The subalgebra of $\End_{\mathfrak{k}}(V^{\otimes k})$ generated by $Br_k$ and $\{\xi_i : i =1,...,k\}$ is isomorphic to the cyclotomic Brauer algebra $Br_{k,2}[\delta_0,\delta_1]$ with specific $\delta_0$ and $\delta_1$ given by 
$$G = Sp_{2n} \hspace{1cm} \delta_0 = -n, \delta_1 = 0,$$
$$G= SO(p,q) \hspace{1cm} \delta_0 = \lfloor \frac{p+q}{2} \rfloor,\delta_1 = p-q.$$
For $k > \rank(G)$ then there is surjective map from the cyclotomic Brauer algebra to subalgebra generated by $s_{i,i+1}, pr_{i,i+1}$ and $\xi_i$. \end{theorem}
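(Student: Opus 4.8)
# Proof Proposal for Theorem \ref{cyclbrauerincentraliser}

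The plan is to build the isomorphism straight from the presentation of $Br_{k,2}[\delta_0,\delta_1]$ in Definition~\ref{cyclotomicbrauerpresentation}: first produce a candidate homomorphism $\Phi$, check that it is well defined for every $k$, and then check that it is injective in the range $k\le\rank(G)$ (which lies inside the range where $\pi$ is faithful in Theorem~\ref{Brauarcentraliser}). I would define $\Phi\colon Br_{k,2}[\delta_0,\delta_1]\to\End(V^{\otimes k})$ on generators by $\Phi(t_{i,i+1})=s_{i,i+1}$, $\Phi(e_{i,i+1})=\pi(e_{i,i+1})$ (the operator $\pm n\,\pr_{i,i+1}$ of Theorem~\ref{Brauarcentraliser}), and $\Phi(\theta_j)=\xi_j$. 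By Theorem~\ref{Brauarcentraliser} the operators $s_{i,i+1}$ and $\pi(e_{i,i+1})$ already lie in $\End_\mathfrak{g}(V^{\otimes k})\subseteq\End_\mathfrak{k}(V^{\otimes k})$, and by Lemma~\ref{xicommutes} each $\xi_j$ lies in $\End_\mathfrak{k}(V^{\otimes k})$; hence once $\Phi$ is known to respect the relations, its image is exactly the subalgebra of $\End_\mathfrak{k}(V^{\otimes k})$ generated by $Br_k$ and $\{\xi_i\}$. In particular, regardless of $k$, $\Phi$ then surjects onto the algebra generated by the $s_{i,i+1}$, $\pr_{i,i+1}$ and $\xi_i$, which is the last assertion of the theorem; and when it is also injective (the case $k\le\rank(G)$) it is an isomorphism onto that subalgebra.

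Well-definedness is a relation check: the relations in Definition~\ref{cyclotomicbrauerpresentation} unpack to the Brauer relations of Definition~\ref{brauerpresentation} at parameter $\delta_0$ among the $t,e$, the type-$B_k$ relations ($\theta_j^2=1$, $[\theta_i,\theta_j]=0$, $t_{i,i+1}\theta_i t_{i,i+1}=\theta_{i+1}$, $[\theta_j,t_{i,i+1}]=0$ for $j\notin\{i,i+1\}$), and the three mixed relations, and each of these must hold as an operator identity. The Brauer relations are Brauer's classical identities inside Theorem~\ref{Brauarcentraliser}. The type-$B_k$ relations are immediate from Remark~\ref{xiaction}: $\xi_j^2=\Id$, the $\xi_j$ commute pairwise since they act on disjoint tensor legs, $s_{i,i+1}\xi_i s_{i,i+1}=\xi_{i+1}$ because $s_{i,i+1}$ transposes those legs, and $[\xi_j,s_{i,i+1}]=0$ for $j\notin\{i,i+1\}$. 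The remaining mixed relations $\pi(e_{i,i+1})\,\xi_i\,\pi(e_{i,i+1})=\delta_1\,\pi(e_{i,i+1})$, $\xi_i\xi_{i+1}\pi(e_{i,i+1})=\pi(e_{i,i+1})\xi_i\xi_{i+1}=\pi(e_{i,i+1})$ and $[\xi_j,\pi(e_{i,i+1})]=0$ for $j\notin\{i,i+1\}$ are verified by a direct computation using the explicit trivial vector $v_\mathbbm{1}$ of Lemma~\ref{decompofVtensorV} and the explicit $\xi$ of Remark~\ref{xiaction}; this computation is also what forces the stated value of $\delta_1$, which is a normalization of $\tr(\xi|_V)$, equal to $p-q$ for $O(p,q)$ and to $0$ for $Sp_{2n}(\mathbb{R})$, where the $\pm 1$-eigenspaces of $\xi$ have equal dimension.

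For injectivity when $k\le\rank(G)$ I would show that the images under $\Phi$ of the labeled-Brauer-diagram basis of $Br_{k,2}[\delta_0,\delta_1]$ (for $m=2$ a ``label'' is simply an element of $\mathbb{Z}/2\mathbb{Z}$ on each strand) are linearly independent in $\End(V^{\otimes k})$. The device is the $\xi$-eigenspace splitting $V=V^+\oplus V^-$, which induces a $(\mathbb{Z}/2\mathbb{Z})^k$-grading on $V^{\otimes k}$; each $\Phi(d)$ is homogeneous for this grading in a way dictated by the labels on $d$, so one can peel off the label data and reduce to the unlabeled statement, which is Brauer's faithfulness (the $k\le n$ case of Theorem~\ref{Brauarcentraliser}). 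Concretely I would filter diagrams by the number $r$ of through-strands: the span of diagrams with at most $r$ through-strands is a two-sided ideal; on the top stratum $r=k$ the map $\Phi$ restricts to the signed-permutation action of $S_k\rtimes(\mathbb{Z}/2\mathbb{Z})^k$ on $V^{\otimes k}$, which is faithful because one can evaluate on pure tensors of $\xi$-eigenvectors from $V^+$ and $V^-$ (both nonzero since $\xi$ is a non-scalar involution); one then descends by downward induction on $r$, using that a diagram with $r$ through-strands factors, via the $\pr_{i,i+1}$'s, through a copy of $V^{\otimes r}$ on which its middle part is again a signed permutation, the distinct cup/cap patterns together with their labels being separated by evaluation on well-chosen eigenvectors exactly as in Brauer's original separation argument --- this descent is the step that uses $k\le\rank(G)$.

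The main obstacle is precisely this last injectivity step: one must be sure the $\xi$-labels genuinely contribute new linearly independent operators beyond Brauer's theorem rather than collapsing, and the delicate point is the symplectic value $\delta_1=0$, which makes $e_{i,i+1}\theta_i e_{i,i+1}$ vanish in $Br_{k,2}[-n,0]$. So one has to check that the chosen diagram basis is not over-counting (i.e.\ that no relation beyond those of Definition~\ref{cyclotomicbrauerpresentation} is forced) and that $\Phi$ annihilates nothing further, which is most safely done by matching, stratum by stratum in the through-strand filtration, the dimension of each layer of $Br_{k,2}[\delta_0,\delta_1]$ against the rank of the corresponding space of operators. An alternative is to deduce injectivity a posteriori from the surjectivity computation of Section~\ref{surjection} together with a global dimension count, but that argument belongs to the next section, so the self-contained route is the diagram-separation argument described above.
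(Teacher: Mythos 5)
Your construction of $\Phi$ and the well-definedness check coincide with the paper's: the same assignment on generators, the same reduction of the $t,e$-relations to Theorem~\ref{Brauarcentraliser}, the same easy verification of the hyperoctahedral relations, and the same computation of $\delta_1$ by letting $\pr_{12}\xi_1$ act on the trivial vector $v_\mathbbm{1}$ of Lemma~\ref{decompofVtensorV} (the paper evaluates $\langle\xi_1 v_\mathbbm{1},v_\mathbbm{1}\rangle$, getting $p-q$ for $SO(p,q)$ and $0$ for $Sp_{2n}(\mathbb{R})$ because $\xi_1 v_\mathbbm{1}$ lies in $S^2V$ while $v_\mathbbm{1}\in\Lambda^2V$; this is equivalent to your trace description). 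Where you genuinely diverge is injectivity. The paper factors each marked diagram as $d_1^\theta\circ d\circ d_2^\theta$, with marked identity diagrams sandwiching an unmarked Brauer diagram; it then uses the projection $Br_{k,2}\to Br_k$ killing the $\theta_i$ to reduce to Brauer's faithfulness of $\pi$, and finishes by arguing that two distinct markings of the same underlying diagram must differ on through strands and are then separated because $\xi_i\neq\xi_j$. You instead use the $\xi$-eigenspace splitting $V=V^+\oplus V^-$ and the induced $(\mathbb{Z}/2\mathbb{Z})^k$-grading, separating diagrams stratum by stratum in the through-strand filtration by evaluating on pure tensors of eigenvectors. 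Your mechanism is arguably more robust: the paper's appeal to the projection $Br_{k,2}\to Br_k$ tacitly needs a compatible projection on the operator side, and its claim that a mark on a horizontal strand cannot be rewritten is asserted rather than proved, whereas eigenvector evaluation gives a concrete separation criterion (and your top-stratum faithfulness of $\mathbb{C}[S_k\rtimes(\mathbb{Z}/2\mathbb{Z})^k]$ is clean since both eigenspaces are nonzero and $\dim V\geq k$). The trade-off is that your descent through the lower strata, and the handling of the degenerate symplectic parameter $\delta_1=0$, remain sketched --- but so does the corresponding step in the paper, so neither argument is more complete than the other at that point.
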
  

We prove this theorem in several parts. First we show that a map from $Br_{k,2}$ to $\End_\mathfrak{k}(V^{\otimes k})$ is well defined. Then we prove that this map is injective. 

We define a map from $Br_{k,2}[\delta]$ to $\End_\mathfrak{k}(V^{\otimes k})$. From now on $\delta_0$ and $\delta_1$ will be values corresponding to  Theorem \ref{cyclbrauerincentraliser}.
 \begin{subsection}{Defining a map from $Br_{k,2}$ to $\End_\mathfrak{k}(V^{\otimes k})$}
\begin{definition}\label{cyclbrauerinj} We define a map from $Br_{k,2}$ to $\End(V^{\otimes k})$ by the following definition on generators: 

$$\Phi(t_{i,i+1}) = s_{i,i+1},$$
$$\Phi(e_{i,i+1}) = \delta_0 pr_{i,i+1},$$
$$\Phi(\theta_i) = \xi_i.$$

This is an extension of the map $\pi$ in Theorem \ref{Brauarcentraliser}.

\end{definition} 

\begin{lemma} The map $\Phi$ defined in Definition \ref{cyclbrauerinj} is well defined. \end{lemma}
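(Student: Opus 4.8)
The plan is to verify that $\Phi$ respects every defining relation of the presentation of $Br_{k,2}$ given in Definition \ref{cyclotomicbrauerpresentation}, specialized to $m=2$. Since $\Phi$ is declared on generators, well-definedness amounts to checking the relations. The relations involving only $t_{i,i+1}$ and $e_{i,i+1}$ are handled for free by Theorem \ref{Brauarcentraliser}: the subalgebra they generate already maps to $\End_{\mathfrak g}(V^{\otimes k})\subset\End_{\mathfrak k}(V^{\otimes k})$ via $\pi$, and $\Phi$ restricts to $\pi$ there. So the work is entirely about the new generator $\theta_i\mapsto\xi_i$, and its interaction with the old generators.

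First I would record the elementary facts about $\xi$ on $V$ from Remark \ref{xiaction}: in the $O(p,q)$ case $\xi$ is the diagonal $\pm1$ matrix $I_{p,q}$, so $\xi^2=\Id$; in the $Sp_{2n}(\mathbb R)$ case $\xi(f_i)=\sqrt{-1}f_i'$, $\xi(f_i')=-\sqrt{-1}f_i$, so again $\xi^2=\Id$. Hence $\xi_i^2=\Id$ for all $i$, giving $\theta_i^2\mapsto\Id$ (consistent with $m=2$), and the $\xi_i$ commute with each other for distinct $i$ since they act on different tensor legs. This shows $t_{i,i+1}$ together with $\xi_i$ generate a copy of $\mathbb C[S_k\rtimes(\mathbb Z/2\mathbb Z)^k]$: the $\xi_i$ span the $(\mathbb Z/2)^k$ part, the $s_{i,i+1}$ the $S_k$ part, and conjugating $\xi_i$ by $s_{i,i+1}$ permutes the legs, i.e. $s_{i,i+1}\xi_i s_{i,i+1}=\xi_{i+1}$, which is exactly the semidirect product relation. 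The remaining new relations to check are: (i) $e_{i,i+1}\theta_i^l e_{i,i+1}=\delta_l e_{i,i+1}$ for $l=0,1$, i.e. $\delta_0^2\,\pr_{i,i+1}\,\xi_i^l\,\pr_{i,i+1}=\delta_l\,\delta_0\,\pr_{i,i+1}$; (ii) $\theta_i\theta_{i+1}e_{i,i+1}=e_{i,i+1}\theta_i\theta_{i+1}=e_{i,i+1}$, i.e. $\xi_i\xi_{i+1}\pr_{i,i+1}=\pr_{i,i+1}\xi_i\xi_{i+1}=\pr_{i,i+1}$; and (iii) $[\theta_i,e_{j,j+1}]=0$ for $j\neq i,i-1$, which is immediate since $\xi_i$ and $\pr_{j,j+1}$ act on disjoint tensor legs.

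The substance is in (i) and (ii), and these reduce to a two-tensor-leg computation on $V\otimes V$. For (ii), the point is that $\xi\otimes\xi$ fixes the trivial vector $v_{\mathbbm 1}$: for $O(2n+1)$, $(\xi\otimes\xi)(\sum e_i\otimes e_i)=\sum(\pm e_i)\otimes(\pm e_i)=\sum e_i\otimes e_i$; for $Sp_{2n}(\mathbb R)$, applying $\xi\otimes\xi$ to $\sum f_i\otimes f_i'-f_i'\otimes f_i$ sends $f_i\otimes f_i'\mapsto(\sqrt{-1})(-\sqrt{-1})f_i'\otimes f_i=f_i'\otimes f_i$ and $f_i'\otimes f_i\mapsto f_i\otimes f_i'$, so the whole expression is fixed. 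Since $\pr_{12}$ projects onto $\mathbb C v_{\mathbbm 1}$ (up to the normalization in Lemma \ref{decompofVtensorV}), $\xi\otimes\xi$ and $\pr_{12}$ commute and $(\xi\otimes\xi)\pr_{12}=\pr_{12}$, giving (ii). For (i) with $l=0$ this is just $e_{i,i+1}^2=\delta_0 e_{i,i+1}$, already known; with $l=1$ I need $\pr_{12}(\xi\otimes\xi)\pr_{12}=\tfrac{\delta_1}{\delta_0}\pr_{12}$, equivalently $\langle v_{\mathbbm 1},(\xi\otimes\xi)v_{\mathbbm 1}\rangle$ computed against the form that defines $\pr_{12}$ yields the scalar $\delta_1/\delta_0$. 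Wait — more carefully, I should instead use that $\pr_{12}$ has rank one with image $\mathbb C v_{\mathbbm 1}$, write $\pr_{12}=\tfrac{1}{(v_{\mathbbm 1},v_{\mathbbm 1})}v_{\mathbbm 1}\otimes v_{\mathbbm 1}^{\ast}$ appropriately, and compute the scalar by which $(\xi\otimes\xi)$ acts followed by re-projection; for $Sp_{2n}$ the image under $\xi\otimes\xi$ is again $v_{\mathbbm 1}$ so the scalar is $1$ and indeed $\delta_1/\delta_0=0/(-n)=0$... so I must double-check the sign/normalization conventions here, since $\delta_1=0$ forces $\pr_{12}(\xi\otimes\xi)\pr_{12}=0$, meaning $(\xi\otimes\xi)v_{\mathbbm 1}$ must be \emph{orthogonal} to $v_{\mathbbm 1}$ under the relevant pairing — this is consistent because for $\mathfrak{sp}$ the relevant bilinear form on $V$ is symplectic, so $v_{\mathbbm 1}$ is isotropic and the contraction vanishes. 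For $O(p,q)$ the analogous computation gives $\sum_{i\le p}1-\sum_{i>p}1=p-q$, matching $\delta_1$ after the $\delta_0$ normalization.

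I expect the main obstacle to be bookkeeping the normalizations: matching the diagram-algebra conventions of Definition \ref{cyclotomicbrauerpresentation} (where $\pr_{i,i+1}$ carries a factor $\delta_0$ through $e_{i,i+1}$) against the intrinsic projection operators and the specific bilinear forms, so that the scalar that comes out of $e_{i,i+1}\theta_i e_{i,i+1}$ is exactly $\delta_1 e_{i,i+1}$ with the advertised values $\delta_1=0$ for $Sp_{2n}(\mathbb R)$ and $\delta_1=p-q$ for $O(p,q)$. Once the two-leg identity $\pr(\xi\otimes\xi)\pr=(\delta_1/\delta_0)\pr$ and $\pr(\xi\otimes\xi)=(\xi\otimes\xi)\pr=\pr$... wait, the latter contradicts the former unless $\delta_1=\delta_0$, so in fact $(\xi\otimes\xi)$ does \emph{not} fix $v_{\mathbbm 1}$ in general and I will need to recompute $(\xi\otimes\xi)v_{\mathbbm 1}$ honestly in each case rather than assuming invariance — that recomputation, together with the form normalization, is the crux, and everything else (the $S_k\rtimes(\mathbb Z/2)^k$ relations and the disjoint-leg commutations) is routine.
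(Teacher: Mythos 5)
Your overall strategy matches the paper's: only the relations involving $\theta_i$ need checking, the old generators are handled by Theorem \ref{Brauarcentraliser}, and the substantive relations reduce to a computation on $V\otimes V$ with the projection $\pr_{12}$ onto $\mathbb{C}v_{\mathbbm{1}}$. However, the execution of the crucial relation $e_{i,i+1}\theta_i e_{i,i+1}=\delta_1 e_{i,i+1}$ has a genuine gap. That relation contains a \emph{single} $\theta_i$, so the operator to sandwich between the projections is $\xi_i=\xi\otimes\mathrm{Id}$, acting on one tensor leg only; you instead compute with $\xi\otimes\xi=\xi_i\xi_{i+1}$, which is the operator relevant to the different relation $\theta_i\theta_{i+1}e_{i,i+1}=e_{i,i+1}$. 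Conflating the two is exactly what manufactures the ``contradiction'' you flag at the end (between $\pr_{12}(\xi\otimes\xi)\pr_{12}=(\delta_1/\delta_0)\pr_{12}$ and $(\xi\otimes\xi)\pr_{12}=\pr_{12}$): there is no contradiction, because these identities concern different operators. As a result the value of $\delta_1$ is never actually established, and your proposal ends by deferring precisely the computation that is the crux. The needed computation (the paper's route) is: $\pr_{12}\,\xi_1\,\pr_{12}$ is, up to the normalisation of $\pr_{12}$, multiplication by $\langle \xi_1 v_{\mathbbm{1}},v_{\mathbbm{1}}\rangle$; for $SO(p,q)$ one has $\xi_1 v_{\mathbbm{1}}=\sum_{i\le p}e_i\otimes e_i-\sum_{i>p}e_i\otimes e_i$, giving $p-q$; for $Sp_{2n}(\mathbb{R})$ one has $\xi_1 v_{\mathbbm{1}}=\sqrt{-1}\sum_i(f_i'\otimes f_i'+f_i\otimes f_i)\in S^2V$, which is orthogonal to $v_{\mathbbm{1}}\in\Lambda^2V$, giving $0$. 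Tellingly, your ``$\sum_{i\le p}1-\sum_{i>p}1=p-q$'' for $O(p,q)$ is the one-leg computation, not the two-leg one you set up, which confirms the conflation.

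A second, smaller issue: in your check of $\theta_i\theta_{i+1}e_{i,i+1}=e_{i,i+1}$ for $Sp_{2n}(\mathbb{R})$ you correctly compute that $\xi\otimes\xi$ interchanges $f_i\otimes f_i'$ and $f_i'\otimes f_i$, but then conclude ``the whole expression is fixed''. Since $v_{\mathbbm{1}}=\sum_i f_i\otimes f_i'-f_i'\otimes f_i$ is antisymmetric in those two terms, interchanging them negates it, so the conclusion does not follow from your own computation; the normalisation of $\xi$ and of $v_{\mathbbm{1}}$ has to be tracked carefully at this step. (The paper disposes of this relation by a softer argument, observing that $\xi_i\xi_{i+1}$ is the diagonal action of $\xi$ on $V\otimes V$ and hence preserves the trivial summand.) The remaining parts of your proposal --- the hyperoctahedral relations among $s_{i,i+1}$ and $\xi_i$ and the disjoint-leg commutations --- are correct and agree with the paper.
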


 Restricting $\Phi$ to the Brauer algebra as a subalgebra of the cyclotomic Brauer gives the well defined map $\pi$ in Theorem \ref{Brauarcentraliser}. Therefore we only need to check the relations of $Br_{k,2}$ involving $\theta_i$. These relations are:  
$$(1) \hspace{1cm}\theta_i^2 = Id,$$
$$(2) \hspace{1cm}t_{i,i+1}\theta_i t_{i,i+1} = \theta_{i+1},$$
$$(3) \hspace{1cm}e_{i-1,i}\theta_i e_{i-1,i} = \delta_1 e_{i-1,i},$$
$$(4) \hspace{1cm}\theta_i \theta_{i+1}  e_{i,i+1} = e_{i,i+1} \theta_i \theta_{i+1}= e_{i,i+1},$$
$$(5) \hspace{1cm}[\theta_i,e_{j,j+1}] = 0 \text{ for } j \neq i,i-1.$$
$$(6) \hspace{1cm}[\theta_i,t_{j,j+1}] = 0 \text{ for } j \neq i,i-1.$$

Since $\xi$ is an involution then relation $(1)$ holds. The second relation follows from the fact that the symmetric group generated by $s_{i,i+1}$ permutes the tensorands of $V^{\otimes k}$, hence it permutes the operators $\xi_i$. 

\begin{remark} It is sufficient to look at $V \otimes V$ for relation (3).
The operators $pr_{12} (\xi_1) \pr_{12}$ is the following map
$$V\otimes V \overset{pr_{12}}{\to} 1 \overset{\xi_1}{\to} V\otimes V \overset{pr_{12}}{\to} 1.$$
Hence $pr_{12}(\xi_1)\pr_{12}$ can be written as $pr_{12}$ followed by an automorpishm of $1$. Automorphisms of $1$ are equivalent to $\mathbb{C}$. To determine $\delta_0$ we just need to work out this automorphism. 
\end{remark}

\begin{lemma} For $G=SO(p,q)$ then $pr_{12} \epsilon_1$ acts on the trivial module $\mathbbm{1} \subset V \otimes V$ in particular:
$$ pr_{12}\xi_1 (v_\mathbbm{1}) = (p-q) v_\mathbbm{1},$$
similarly for $G= Sp_{2n}(\mathbb{R})$,
$$ pr_{12}\xi_1 (v_\mathbbm{1}) = 0  v_\mathbbm{1}.$$
\end{lemma}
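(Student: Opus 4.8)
The plan is to reduce, exactly as in the remark preceding the statement, to computing the scalar $\lambda$ determined by $pr_{12}\,\xi_1\,pr_{12}=\lambda\,pr_{12}$ on $V\otimes V$, equivalently by $pr_{12}\xi_1(v_\mathbbm{1})=\lambda\, v_\mathbbm{1}$, and then to identify $\lambda$ with the trace of $\xi$ on $V$. The mechanism is the usual partial-trace identity: $pr_{12}$ is, up to the fixed scalar normalisation built into Theorem \ref{Brauarcentraliser} and Definition \ref{cyclbrauerinj}, contraction against the invariant form $B$ defining $G$ (symmetric for $SO(p,q)$, symplectic for $Sp_{2n}(\mathbb{R})$) followed by reinsertion of $v_\mathbbm{1}$; and $v_\mathbbm{1}$ is the canonical invariant tensor $\sum_a u_a\otimes w_a$ corresponding to $\mathrm{id}_V$ under $V\otimes V\cong\End(V)$. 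Hence $pr_{12}\xi_1(v_\mathbbm{1})=\big(\sum_a B(\xi u_a,w_a)\big)v_\mathbbm{1}=\pm\tr(\xi)\,v_\mathbbm{1}$, the sign being $+$ in the orthogonal case and (harmlessly, since the trace will vanish) $-$ in the symplectic case. So everything comes down to evaluating $\tr(\xi)$ from the explicit form of $\xi$ in Remark \ref{xiaction}.

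For $G=SO(p,q)$ the matrix of $\xi$ in the basis $e_1,\dots,e_{2n+1}$ is diagonal with $p$ entries $+1$ and $q$ entries $-1$, so $\tr(\xi)=p-q$. Spelling this out: writing $v_\mathbbm{1}=\sum_i\varepsilon_i\,e_i\otimes e_i$ with $\varepsilon_i=+1$ for $i\le p$ and $\varepsilon_i=-1$ for $i>p$, one has $\xi_1(v_\mathbbm{1})=\sum_i\varepsilon_i^2\,e_i\otimes e_i=\sum_i e_i\otimes e_i$, and applying $pr_{12}$ (contract against $B$, reinsert $v_\mathbbm{1}$) produces the factor $\sum_i B(e_i,e_i)=\sum_i\varepsilon_i=p-q$; hence $pr_{12}\xi_1(v_\mathbbm{1})=(p-q)\,v_\mathbbm{1}$. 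Note a parity shortcut is unavailable here, since for $SO(p,q)$ the copy of $\mathbbm{1}$ sits inside $S^2V$ (the form is symmetric), which is precisely why the answer is the nonzero number $p-q$.

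For $G=Sp_{2n}(\mathbb{R})$ I would instead argue by parity. Using $\xi(f_i)=\sqrt{-1}\,f_i'$, $\xi(f_i')=-\sqrt{-1}\,f_i$ from Remark \ref{xiaction} and $v_\mathbbm{1}=\sum_i(f_i\otimes f_i'-f_i'\otimes f_i)$ from Lemma \ref{decompofVtensorV}, a one-line computation gives $\xi_1(v_\mathbbm{1})=\sqrt{-1}\sum_i(f_i\otimes f_i+f_i'\otimes f_i')$, which is symmetric, i.e.\ lies in $S^2V$. By Lemma \ref{decompofVtensorV} the trivial submodule $\mathbbm{1}\subset V\otimes V$ lies in $\Lambda^2V$, the $(-1)$-eigenspace of $s_{12}$, and $pr_{12}$ annihilates $S^2V$; hence $pr_{12}\xi_1(v_\mathbbm{1})=0$. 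Consistently, the matrix of $\xi$ in the basis $\{f_i,f_i'\}$ is block-diagonal with trace-zero $2\times2$ blocks, so $\tr(\xi)=0$.

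The computations themselves are short and routine. The one point that needs care, and the only place I expect any friction, is bookkeeping the precise normalisation of $pr_{12}$ fixed earlier so that the orthogonal scalar emerges as $p-q$ on the nose rather than as some rescaling of it; this is settled by writing the form $B$, the invariant vector $v_\mathbbm{1}$, and the contraction convention side by side, and then it is a one-line check.
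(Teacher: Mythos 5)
Your proposal is correct and follows essentially the same route as the paper: compute $\xi_1(v_\mathbbm{1})$ explicitly and pair it against $v_\mathbbm{1}$ to extract the scalar $p-q$ in the orthogonal case, and observe that $\xi_1(v_\mathbbm{1})$ lands in $S^2V$ while $\mathbbm{1}\subset\Lambda^2V$ to get $0$ in the symplectic case. The reframing of the scalar as $\tr(\xi)$ and the sign conventions $\varepsilon_i$ in $v_\mathbbm{1}$ are only cosmetic differences from the paper's computation.
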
 This lemma shows that relation $(3)$ is mapped to zero by $\Phi$ where 
$$\delta_1 = \begin{cases} p -q & \text{for } G = SO(p,q),\\ 0 & \text{for } G = Sp_{2n}(\mathbb{R}).\end{cases}$$ 
\begin{proof}
Recall that for $SO(p,q)$ (resp. $Sp_{2n}(\mathbb{R})$) the trivial modules is spanned by $v_\mathbbm{1} = \sum_{i=1}^n e_i \otimes e_i$ (resp. $\sum_{i=1}^n f_i \otimes f_i' - f_i'\otimes f_i$). Using the action of $\xi$ on $V$ defined in Remark \ref{xiaction} then 
$\xi_1(v_\mathbbm{1}) = \sum_{i=1}^p e_i \otimes e_i - \sum_{i=p+1}^{p+q} e_i \otimes e_i,$
and $\xi_1(v_\mathbbm{1}) = \sqrt{-1}\sum_{i=1}^n f_i' \otimes f_i' + f_i \otimes f_i.$
The operator $pr_{12}$ is equivalent to $v \mapsto \langle v, v_\mathbbm{1}\rangle v_\mathbbm{1}$. 
For $G=SO(p,q)$ 
$$\langle \xi_1(v_\mathbbm{1}), v_\mathbbm{1}\rangle  =  \langle \sum_{i=1}^p e_i \otimes e_i - \sum_{i=p+1}^{p+q} e_i \otimes e_i, \sum_{i=1}^n e_i \otimes e_i \rangle = p-q.$$
For $G = Sp_{2n}(\mathbb{R})$the vector $\xi_1(v_\mathbbm{1}) = \sqrt{-1}\sum_{i=1}^n f_i' \otimes f_i' + f_i \otimes f_i$ is in the symmetric square $S^2V$ hence has zero inner product with $v_\mathbbm{1}$ which is in the alternating square $\Lambda^2 V$.
\end{proof}

Note that $e_{12}$ acts by the operator $\delta_0 pr_{12}$ which projects on to the trivial $\mathfrak{g}$-submodule of $V\otimes V$. The action of $\xi$ on $V\otimes V$ is $\xi_1\xi_2$, which comes from the $K$ action so commutes with $pr_{12}$, furthermore $\xi_1\xi_2pr_{12}$ is just the action of $\xi$ on the trivial module, hence $\xi_1\xi_2 e_{12} = e_{12}$ and this shows relation $(4)$ maps to zero. Finally relation $(5)$ and $(6)$ map to zero since $[(X)_i,(Y)_j]=0$ for any $i\neq j $.

Hence we have a well defined map $\Phi$ from $Br_{k,2}$ to $\End_\mathfrak{k}(V^{\otimes k})$.  
\end{subsection}
\begin{subsection}{Our map defines an inclusion $Br_{k,2}$ into $\End_\mathfrak{k}(V^{\otimes k})$.}

We need to prove that for $k \leq n$ the map $\Phi$ is injective. The dimension of $Br_{k,m}$ is $\frac{(2k)!m^k}{k!2^k}$, in particular for $Br_{k,2}$ it is $\frac{(2k)!}{k!}$. This is the same size as the number of marked Brauer diagrams. Note that for $m=2$ the set of marked Brauer diagrams and the equivalence class of directed marked Brauer diagrams are the same since reversing the direction does not change the sign because $1 = -1 mod 2$.

\begin{remark} Let $\tilde{d}$ be a marked Brauer diagram of size $k$ with at most $1$ mark on each strand. Let $d$ be the diagram related to $\tilde{d}$ by forgetting the markings. Let $d_1^\theta$ denote the identity diagram with marks corresponding to every marked top horizontal strand in $\tilde{d}$, Let $d_2^\theta$ denote  the identity diagram with marks corresponding to every markes strand that terminates on the bottom row. We make sure that if a mark is on a horizontal strand then that mark is represented on the right most vertex. This decomposition is unique. We can write $\tilde{d}$ as a concatenation of three diagrams 
$d_1^\theta \circ d \circ d_2^\theta.$ We explain this with an example.
\end{remark}

\begin{example}
We take a diagram in $Br_{k,2}$ and write it as an unmarked diagram pre and post multiplied by marked identities.
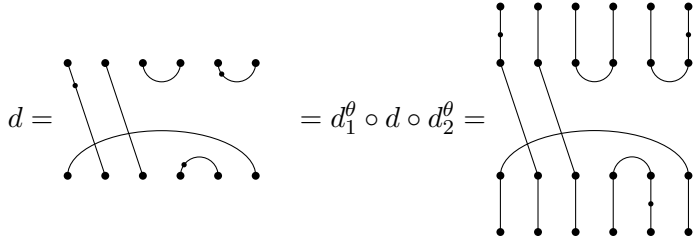
\begin{figure}[ht]
\centerline{
$d=$
\begin{minipage}{30mm}\begin{tikzpicture}[scale=0.5]
  \foreach \x in {0.5,1.5,...,5.5}
    {\fill (\x,3) circle (3pt);
     \fill (\x,0) circle (3pt);}
\begin{scope}[decoration={
    }
    ]
     \draw[postaction={decorate}] (1.5,3) -- (2.5,0); 
    \draw[postaction={decorate}] (2.5,3) arc (180:360:0.5 and 0.5);
     \draw[postaction={decorate}] (0.5,0) arc (180:360:2.5 and -1.2);
  \end{scope}    
\begin{scope}[decoration={
    markings,
    mark= at position 0.2 with{\fill circle (1pt);}
    }
    ] 
    \draw[postaction={decorate}]  (0.5,3) -- (1.5,0);    
    \draw[postaction={decorate}] (3.5,0) arc (180:360:0.5 and -0.5);    
    \draw[postaction={decorate}] (4.5,3) arc (180:360:0.5 and 0.5);            
  \end{scope}
\end{tikzpicture}\end{minipage}
$=d_1^\theta \circ d \circ d_2^\theta=$
\begin{minipage}{45mm}\begin{tikzpicture}[scale=0.5]
  \foreach \x in {0.5,1.5,...,5.5}
    {\fill (\x,7.5) circle (3pt);
     \fill (\x,6) circle (3pt);
     \fill (\x,3) circle (3pt);
     \fill (\x,1.5) circle (3pt);}
\begin{scope}[decoration={    }
    ]
     \draw[postaction={decorate}] (0.5,3) -- (0.5,1.5);
     \draw[postaction={decorate}] (1.5,3) -- (1.5,1.5);
     \draw[postaction={decorate}] (2.5,3) -- (2.5,1.5);
     \draw[postaction={decorate}] (3.5,3) -- (3.5,1.5);
     \draw[postaction={decorate}] (5.5,3) -- (5.5,1.5); 
     \draw[postaction={decorate}] (1.5,7.5) -- (1.5,6);
     \draw[postaction={decorate}] (2.5,7.5) -- (2.5,6);
     \draw[postaction={decorate}] (3.5,7.5) -- (3.5,6);
     \draw[postaction={decorate}] (4.5,7.5) -- (4.5,6); 
     \draw[postaction={decorate}]  (0.5,6) -- (1.5,3); 
      \draw[postaction={decorate}] (1.5,6) -- (2.5,3); 
     \draw[postaction={decorate}] (3.5,3) arc (180:360:0.5 and -0.5);    
    \draw[postaction={decorate}] (4.5,6) arc (180:360:0.5 and 0.5);
    \draw[postaction={decorate}] (2.5,6) arc (180:360:0.5 and 0.5);
     \draw[postaction={decorate}] (0.5,3) arc (180:360:2.5 and -1.2);
  \end{scope}      
\begin{scope}[decoration={
    markings,
    mark= at position 0.5 with{\fill circle (1pt);}
    }
    ]
     \draw[postaction={decorate}] (4.5,3) -- (4.5,1.5);   
     \draw[postaction={decorate}] (5.5,7.5) -- (5.5,6);  
     \draw[postaction={decorate}]  (0.5,7.5) -- (0.5,6);     
  \end{scope}
\end{tikzpicture}\end{minipage}}
 \caption{Writing a marked diagram as an unmarked diagram post and pre multiplied by marked identities}
\label{xyelts4}
\end{figure}
\end{example}
It is easy to write $d_1^\theta$ in terms of the generators, $t_{i,i+1},e_{i,i+1}$ and $\theta_i$. Suppose $d_1^\theta$ has marks on a subset $I \subset \{1,...,n\}$ then $d_1^\theta = \prod_{i \in I}\theta_i$.
The map $\Phi$ takes $\tilde{d}$ to $\Phi(d_1^\theta)\pi(d)\Phi(d_2^\theta)= \prod_{i\in I_1}\theta_i \pi(d) \prod_{i\in I_2}\theta_2$.

\begin{lemma} The map $\Phi$ is injective for $k \leq n$. \end{lemma}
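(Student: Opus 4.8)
The plan is to show injectivity of $\Phi$ by producing, for every marked Brauer diagram $\tilde d$, a preimage of the corresponding operator that keeps the diagrams linearly independent. We already know from Theorem~\ref{Brauarcentraliser} that the restriction $\pi$ of $\Phi$ to the ordinary Brauer subalgebra $Br_k[\delta_0]$ is faithful for $k\le n$; so on the $\{t_{i,i+1},e_{i,i+1}\}$-part there is nothing to prove, and the only issue is how the extra generators $\theta_i\mapsto\xi_i$ interact with $\pi(Br_k)$. Since $\dim Br_{k,2}=(2k)!/k!$ equals the number of marked Brauer diagrams, and $\{\tilde d\}$ is a basis of $Br_{k,2}$, it suffices to show that $\{\Phi(\tilde d)\}$ is linearly independent in $\End_\mathfrak{k}(V^{\otimes k})$.

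First I would use the canonical factorisation $\tilde d=d_1^\theta\circ d\circ d_2^\theta$ from the Remark: writing $d_1^\theta=\prod_{i\in I_1}\theta_i$ and $d_2^\theta=\prod_{i\in I_2}\theta_i$, we get $\Phi(\tilde d)=\big(\prod_{i\in I_1}\xi_i\big)\,\pi(d)\,\big(\prod_{i\in I_2}\xi_i\big)$. The key structural input is that each $\xi_i$ is an \emph{invertible} operator on $V^{\otimes k}$ (indeed $\xi_i^2=\mathrm{Id}$), and more importantly that conjugation by $\prod_{i\in I}\xi_i$ is a grading operator: on the weight-space decomposition of $V$ under $\xi$ (eigenvalues $\pm1$ in the $O(p,q)$ case, $\pm\sqrt{-1}$ in the $Sp$ case after passing to the $f_i^{\pm1}$ basis), each $\xi_i$ acts diagonally, so the operators $\prod_{i\in I}\xi_i$ for distinct subsets $I\subseteq\{1,\dots,k\}$ are linearly independent in $\End(V^{\otimes k})$ — this just needs $\dim V\ge 2$, which holds. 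Then I would argue that a nontrivial linear relation $\sum_{\tilde d}c_{\tilde d}\Phi(\tilde d)=0$ can be split according to the ``shape'' data $(I_1,d,I_2)$: group the terms by $d$, and within a fixed $d$ the operators $\big(\prod_{i\in I_1}\xi_i\big)\pi(d)\big(\prod_{i\in I_2}\xi_i\big)$ are distinguished because left/right multiplication by the invertible diagonal operators $\prod\xi_i$ cannot be absorbed into $\pi(d)$ for a different $d'$ — here one uses that $\pi(Br_k)$ together with the $\xi_i$ generate an algebra in which the two families meet only in the expected way. Concretely, apply both sides to a carefully chosen pure tensor $f_{j_1}\otimes\cdots\otimes f_{j_k}$ of definite $\xi$-weight and read off, coefficient by coefficient, that all $c_{\tilde d}$ vanish; the ordinary-Brauer faithfulness (Theorem~\ref{Brauarcentraliser}) handles the separation of the $d$-part.

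I expect the main obstacle to be exactly this last separation: showing that the extra diagonal operators $\prod_{i\in I}\xi_i$ do not create collisions between $\Phi(\tilde d)$ and $\Phi(\tilde d')$ for diagrams with different underlying $d$, since $\pi(d)$ itself already moves tensor factors around and can shift $\xi$-weights. The clean way to control this is a weight/grading argument: decompose $V^{\otimes k}=\bigoplus_\mu V^{\otimes k}_\mu$ into $\xi$-weight spaces (a $(\mathbb{Z}/2)^k$-grading, or the image thereof), observe that $\pi(t_{i,i+1})$ permutes weight spaces according to $s_{i,i+1}$ and $\pi(e_{i,i+1})$ lands in weight-zero pieces, so that each $\Phi(\tilde d)$ has a \emph{predictable} matrix of weight-components; comparing these components reduces the independence of $\{\Phi(\tilde d)\}$ to the already-known independence of $\{\pi(d)\}$ within each graded block. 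Once the grading bookkeeping is set up, injectivity follows by the dimension count, and since $\Phi$ is then an isomorphism onto its image the subalgebra generated by $Br_k$ and the $\xi_i$ is exactly $Br_{k,2}[\delta_0,\delta_1]$, completing Theorem~\ref{cyclbrauerincentraliser}.
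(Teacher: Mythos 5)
Your strategy --- reduce injectivity to linear independence of the operators $\Phi(\tilde d)$ as $\tilde d$ runs over marked diagrams, and separate them via the $\xi$-weight decomposition of $V^{\otimes k}$ --- is a genuinely different route from the paper's. The paper argues diagram by diagram: it first matches the underlying unmarked diagrams using the quotient $Br_{k,2}\to Br_k$, $\theta_i\mapsto 1$, then disposes of horizontal-strand marks via the equivalence relation (since $\xi_i\,pr_{i,i+1}$ cannot be rewritten as $pr_{i,i+1}\xi_i$ plus lower terms) and of through-strand marks via $\xi_i\neq\xi_j$. Your formulation is in one respect sharper, since injectivity of a linear map requires independence of the images of a basis, not merely that distinct basis diagrams have distinct images. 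However, the step you yourself flag as ``the main obstacle'' is not closed by what you write. You assert that comparing weight-block components ``reduces the independence of $\{\Phi(\tilde d)\}$ to the already-known independence of $\{\pi(d)\}$ within each graded block.'' Theorem \ref{Brauarcentraliser} gives linear independence of the operators $\pi(d)$ on all of $V^{\otimes k}$; it says nothing about their components in a fixed block $\Hom(V_\mu,V_\nu)$, and those components can be zero or proportional for distinct $d$ (for instance $\pi(e_{i,i+1})$ is supported only on the blocks met by the invariant vector $v_{\mathbbm{1}}$). Concretely, writing $\Xi_I=\prod_{i\in I}\xi_i$, $\chi_I(\epsilon)=\prod_{i\in I}\epsilon_i$ and $a_d(\mu,\nu)=\sum_{I_1,I_2}c_{(I_1,d,I_2)}\chi_{I_1}(\nu)\chi_{I_2}(\mu)$, a relation $\sum_{\tilde d} c_{\tilde d}\Phi(\tilde d)=0$ yields only $\sum_d a_d(\mu,\nu)\,[\pi(d)]_{\nu\mu}=0$ block by block; to invert the character sums and recover $c_{\tilde d}=0$ you must first show $a_d(\mu,\nu)=0$ for every $d$ and every $(\mu,\nu)$, which is exactly the statement that the block components of distinct $\pi(d)$ do not conspire. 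That argument is the real content of the lemma and is missing from the proposal.

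Two smaller points. First, independence of the family $\{\Xi_I\}_{I\subseteq\{1,\dots,k\}}$ requires both $\xi$-eigenspaces of $V$ to be nonzero, not merely $\dim V\ge 2$ (this does hold here, since $p,q\ge 1$ and $n\ge 1$, but it is the correct hypothesis to record, and it is where the argument would degenerate for $O(p,0)$, consistent with $\delta_1=p-q$ collapsing the algebra). Second, in the symplectic case $\xi$ is an involution, so its eigenvalues are $\pm 1$ with eigenvectors $f_j\mp\sqrt{-1}\,f_j'$; the vectors $f_j^{\pm 1}$ themselves are not $\xi$-eigenvectors, so the grading must be set up in the correct basis before the ``diagonal operator'' argument applies.
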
 

\begin{proof} Suppose there are two diagrams $\tilde{d}$ and $\tilde{f}$ that get mapped to the same element by $\Phi$. That is $\Phi(\tilde{d}) = \Phi (\tilde{f})$.
This means that $$\Phi(d_1^\theta)\pi(d)\Phi(d_2^\theta) = \Phi(f_1^\theta)\pi(f)\Phi(f_2^\theta).$$ 
By considering the projection from $Br_{k,2}$ to $Br_k$ that takes $\theta_i$ to $Id$ this shows that 
$\pi(d) = \pi(f)$ and since $\pi$ is injective then $d =f$.
So we have reduced it to the case that $\tilde{d}$ and $\tilde{f}$ are the same unmarked diagram. 
So $\tilde{d}$ and $\tilde{f}$ have different markings on the same diagram.  

The operator $\xi_i$ does not commute with the operator $e_{i,i+1}$, furthermore there is no way in our generating set to write $\xi_i e_{i,i+1} = e_{i,i+1}\xi_i +w$ where $w$ is a word in the generators.  Therefore $\tilde{d}$ and $\tilde{f}$ can only differ by marking on through strands. Every marking on a through strand gives a unique value in $d_1^\theta$ or $f_1^\theta$. 
By acting by the symmetric product on $\tilde{d}$ and $\tilde{f}$ we can assume that they differ by markings on two vertical strands. This implies that for some $i \neq j$ then $\xi_i = \xi_j$ which isn't true so there does not exist two different marked diagrams which map to the same operator under $\Phi$. 
\end{proof}
\end{subsection}
\end{section}

\begin{section}{Proving the full centraliser result}\label{surjection}

In the previous section we proved that the cyclotomic Brauer algebra is a subalgebra of $\End_\mathfrak{k}(V^{\otimes k})$ in this section we show that the cyclotomic Brauer is the entire centraliser of $\mathfrak{k}$. We split this section into two, one for the symplectic group and another for the orthogonal groups. 

\begin{subsection}{The symplectic group}\label{sphomspaces}

In this subsection $G$ will be fixed to $Sp_{2n}(\mathbb{R})$, $\mathfrak{g}$ is $\mathfrak{sp}_{2n}$, $\mathfrak{k}$ is $\mathfrak{gl}_n$. The standard module of $Sp_{2n}(\mathbb{R})$is $V = \mathbb{C}^{2n}$ the standard module of $\mathfrak{gl}_n$ is denoted $V_\mathfrak{gl} = \mathbb{C}^n$  with dual module $V_\mathfrak{gl}^*$. We will denote the standard basis by $e_i$ which should not lead to any confusion.

\begin{definition} A walled Brauer diagram of rank $s,t$ is a Brauer diagram from $s +t$ vertices to $s+t$ vertices with a wall between the first $s$ vertices and the last $t$ vertices, such that every through strand does not cross the wall and every horizontal strand does cross the wall. \end{definition}

\begin{figure}[ht]
\centerline{
$x=$
\begin{minipage}{45mm}\begin{tikzpicture}[scale=0.5]
  \foreach \x in {0.5,1.5,...,5.5}
    {\fill (\x,3) circle (3pt);
     \fill (\x,0) circle (3pt);}
\begin{scope}[
    ]
     \draw[postaction={decorate}] (1.5,3) -- (2.5,0); 
    \draw[postaction={decorate}] (2.5,3) arc (180:360:0.5 and 0.5);
     \draw[postaction={decorate}] (0.5,0) arc (180:360:2.5 and -1.2);
     \draw[dotted] (3,3.5) -- (3,-.5);
  \end{scope}    
\begin{scope}
    \draw[postaction={decorate}] (3.5,0) -- (5.5,3);    
    \draw[postaction={decorate}] (4.5,3) --(4.5,0);            
  \end{scope}
\begin{scope}
     \draw[postaction={decorate}]  (0.5,3) -- (1.5,0);    
  \end{scope}
\end{tikzpicture}\end{minipage}
$y=$\begin{minipage}{45mm}\begin{tikzpicture}[scale=0.5]
  \foreach \x in {0.5,1.5,...,5.5}
    {\fill (\x,3) circle (3pt);
     \fill (\x,0) circle (3pt);}
\begin{scope}
     \draw[postaction={decorate}] (1.5,3) -- (0.5,0);
      \draw[dotted] (3,3.5) -- (3,-.5);
  \end{scope}   
\begin{scope}
     \draw[postaction={decorate}] (2.5,0) arc (180:360:1.5 and -1.2);
  \end{scope}    
\begin{scope}
    \draw[postaction={decorate}] (3.5,0) --(4.5,3);  
    \draw[postaction={decorate}] (2.5,3) arc (180:360:1.5 and 1.2);  
    \draw[postaction={decorate}] (4.5,0) -- (3.5,3);            
  \end{scope}
\begin{scope}
     \draw[postaction={decorate}]  (0.5,3) -- (1.5,0);    
  \end{scope}
\end{tikzpicture}\end{minipage}}
 \caption{Two walled Brauer diagrams in $WB_{3,3}(\delta)$}
\end{figure}
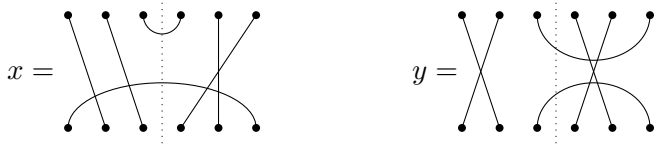

\begin{definition} The walled Brauer algebra associated to integers $s,t$ and a variable $\delta$ is the subalgebra of the Brauer algebra $Br_{s+t}[\delta]$ generated by walled Brauer diagrams.\end{definition}

\begin{theorem}\cite{BGCMH94}\label{WalledBrauercentraliser} Let $\rho$ denote the action of $GL_n$ (resp. $\mathfrak{gl}_n$) on $V_\mathfrak{gl}$. Define $\pi$ from $WBr_{s,t}[n]$ to $\End V^{\otimes s}\otimes (V_\mathfrak{gl}^*)^{\otimes t}$ by:
$$\pi(t_{i,i+1}) = s_{i,i+1} \text{ for all } i \neq s,$$
$$\pi(e_{s,s+1}) =   n pr_{s,s+1}, $$
where $pr_{s,s+1}$ is the projection of the $s^{th}$ and $s+1^{st}$ factor $V_\mathfrak{gl}\otimes V_\mathfrak{gl}^*$ to the trivial submodule spanned by $\sum_{i=1}^n e_i \otimes e_i^*$.

We have the following equalities: 

$$\begin{array}{lcr}
\rho^s \otimes (\rho^*)^t(\mathfrak{gl}_n) & = & \End_{Br_{s,t}} (V_\mathfrak{gl}^{\otimes s}\otimes (V_\mathfrak{gl}^*)^{\otimes t}),\\
\pi(Br_{s,t}[n]) & = & \End_\mathfrak{gl} (V_\mathfrak{gl}^{\otimes s}\otimes (V_\mathfrak{gl}^*)^{\otimes t}).\\
\end{array} 
$$
Furthermore if $s+t \leq n$ then the representation $\pi$ is faithful. \end{theorem}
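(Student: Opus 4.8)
The final statement to prove is Theorem~\ref{WalledBrauercentraliser}, the walled Brauer algebra centraliser result. My plan is to reduce it to the classical Schur--Weyl duality for $GL_n$ and the ordinary Brauer duality rather than reprove everything from scratch.

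The plan is to proceed as follows. First I would set up the mixed tensor space $W = V_\mathfrak{gl}^{\otimes s}\otimes (V_\mathfrak{gl}^*)^{\otimes t}$ and observe that the operators $s_{i,i+1}$ (for $i\neq s$) and $n\,pr_{s,s+1}$ all commute with the diagonal $\mathfrak{gl}_n$-action, so that $\pi$ does land in $\End_{\mathfrak{gl}_n}(W)$; this is a direct check using that $\sum_i e_i\otimes e_i^*$ spans the trivial submodule of $V_\mathfrak{gl}\otimes V_\mathfrak{gl}^*$ and is $\mathfrak{gl}_n$-invariant. The key structural point I would invoke is the standard isomorphism $W \cong \Hom(V_\mathfrak{gl}^{\otimes t}, V_\mathfrak{gl}^{\otimes s})$ as $\mathfrak{gl}_n$-modules, under which $\End_{\mathfrak{gl}_n}(W)$ becomes a space of maps between homomorphism spaces. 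Then I would cite the First Fundamental Theorem of invariant theory for $GL_n$ (equivalently, classical Schur--Weyl duality together with the fact that all $\mathfrak{gl}_n$-homomorphisms $V_\mathfrak{gl}^{\otimes t}\to V_\mathfrak{gl}^{\otimes s}$ are generated by contractions and permutations) to conclude that $\End_{\mathfrak{gl}_n}(W)$ is spanned by the images of walled Brauer diagrams. This gives surjectivity of $\pi$, i.e. $\pi(WBr_{s,t}[n]) = \End_{\mathfrak{gl}_n}(W)$, and the equality $\rho^s\otimes(\rho^*)^t(\mathfrak{gl}_n) = \End_{WBr_{s,t}}(W)$ then follows by the double centraliser theorem, since $\mathfrak{gl}_n$ (or rather its image) acts semisimply on $W$.

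For the faithfulness statement when $s+t\leq n$, the plan is a dimension count: the walled Brauer algebra $WBr_{s,t}[n]$ has dimension $(s+t)!$, equal to the number of walled Brauer diagrams on $s+t$ strands, and one shows the images $\pi(d)$ of distinct walled Brauer diagrams $d$ are linearly independent in $\End(W)$ when $n$ is large enough. The cleanest route is to note that a walled Brauer diagram acts on $W$ essentially as a contraction-and-permutation operator, and to evaluate on suitably chosen basis tensors (using distinct indices from $\{1,\dots,n\}$, which is possible precisely because $s+t\leq n$) to separate diagrams with different strand-connection patterns; alternatively one quotes the known semisimplicity of $WBr_{s,t}[n]$ for $n\geq s+t$ and matches dimensions via the double centraliser theorem applied to the decomposition of $W$ into irreducible $\mathfrak{gl}_n$-modules labelled by pairs of partitions (rational representations of $GL_n$).

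I expect the main obstacle to be the faithfulness/linear-independence argument: verifying carefully that distinct walled Brauer diagrams give linearly independent operators requires a clean bookkeeping of how contractions interact with the wall condition and the $GL_n$-$GL_n$ versus $\mathfrak{gl}_n$-diagonal distinction, and the bound $s+t\leq n$ enters exactly here. Since Theorem~\ref{WalledBrauercentraliser} is attributed to \cite{BGCMH94}, in the paper itself this is simply cited, and the role of this subsection is to record it as the tool used for the symplectic case; so in practice I would keep the ``proof'' to a pointer to the literature plus the translation between the $GL_n$-module $W$ and the mixed tensor space relevant to $Sp_{2n}(\mathbb{R})$, deferring the genuine work to the cited source.
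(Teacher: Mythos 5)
Your proposal is consistent with the paper: Theorem~\ref{WalledBrauercentraliser} is stated with a citation to \cite{BGCMH94} and no proof is given in the text, exactly as you anticipate in your final paragraph. Your sketched argument (invariance of the generators, the First Fundamental Theorem for $GL_n$ to get surjectivity, the double centraliser theorem for the other equality, and a linear-independence/dimension count for faithfulness when $s+t\leq n$) is the standard route taken in the cited literature, so there is nothing to correct.
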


The dimension of the Walled brauer algebra $WB_{s,t}$ is $(s+t)!$. To show this we can define a vector space isomorphism between $WB_{s,t}$ and $\mathbb{C}[S_{s+t}]$ by flipping the first $s$ top vertices with the first $s$ bottom vertices. Here we think of $\mathbb{C}[S_{s+t}]$ as the diagram algebra of diagrams that do not have any horizontal strands.

\begin{remark} The standard representation $V$ of $Sp_{2n}(\mathbb{R})$ decomposes as $V_\mathfrak{gl} \oplus V_\mathfrak{gl}^*$ as a $\mathfrak{gl}_n$ module. This allows us to further study $\End_{\mathfrak{gl}_n}( V^{\otimes k})$\end{remark}
The above remark can be extended to $V^{\otimes k}$. It shows that as $\mathfrak{gl}_n$ modules 
$$V^{\otimes k} = \bigoplus_{s=0}^k {k \choose s} V_\mathfrak{gl}^{\otimes s} \otimes (V_\mathfrak{gl}^*)^{\otimes n-s}.$$

\begin{remark}\label{homspacezero} The space $\Hom_\mathfrak{gl}( V_\mathfrak{gl}^{\otimes s} \otimes (V_\mathfrak{gl}^*)^{\otimes n-s},  V_\mathfrak{gl}^{\otimes t} \otimes (V_\mathfrak{gl}^*)^{\otimes n-t})$ is zero for $s \neq t$.\end{remark}

\begin{lemma}\label{endsplitting} Let $V$ be the standard representation of $Sp_{2n}(\mathbb{R})$and let $V_\mathfrak{gl}$ be the standard representation of $\mathfrak{gl}_n$ then 
$$\End_\mathfrak{gl}(V^{\otimes k}) = \bigoplus_{s=0}^k\End (\mathbb{C}^{k\choose s} )Br_{s,k-s}[n].$$
\end{lemma}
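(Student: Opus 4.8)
The plan is to decompose $V^{\otimes k}$ as a $\mathfrak{gl}_n$-module using the fact that $V \cong V_\mathfrak{gl} \oplus V_\mathfrak{gl}^*$. Expanding the tensor power, we get $V^{\otimes k} = \bigoplus_{w} V_w$ where $w$ ranges over all words of length $k$ in the two symbols $\{V_\mathfrak{gl}, V_\mathfrak{gl}^*\}$, and $V_w$ is the corresponding ordered tensor product. Grouping words by their number $s$ of occurrences of $V_\mathfrak{gl}$, each $V_w$ with $s$ such occurrences is isomorphic as a $\mathfrak{gl}_n$-module to the ``sorted'' module $V_\mathfrak{gl}^{\otimes s} \otimes (V_\mathfrak{gl}^*)^{\otimes (k-s)}$ via the permutation operator that reorders the tensor factors. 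Thus $V^{\otimes k} \cong \bigoplus_{s=0}^k (\mathbb{C}^{\binom{k}{s}} \otimes V_\mathfrak{gl}^{\otimes s} \otimes (V_\mathfrak{gl}^*)^{\otimes (k-s)})$ as the remark preceding the lemma already records.

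Next I would compute the endomorphism algebra of this decomposition. By Remark \ref{homspacezero}, $\Hom_\mathfrak{gl}$ between the sorted modules for $s \neq t$ vanishes, so the decomposition of $V^{\otimes k}$ into the isotypic-in-$s$ blocks $\bigoplus_{|w|_{V_\mathfrak{gl}}=s} V_w$ is respected by every $\mathfrak{gl}_n$-endomorphism. Therefore $\End_\mathfrak{gl}(V^{\otimes k}) = \bigoplus_{s=0}^k \End_\mathfrak{gl}\!\big(\mathbb{C}^{\binom{k}{s}} \otimes V_\mathfrak{gl}^{\otimes s} \otimes (V_\mathfrak{gl}^*)^{\otimes (k-s)}\big)$. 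Each summand, being the endomorphism algebra of a multiplicity space tensored with a fixed module, is $\End(\mathbb{C}^{\binom{k}{s}}) \otimes \End_\mathfrak{gl}\big(V_\mathfrak{gl}^{\otimes s} \otimes (V_\mathfrak{gl}^*)^{\otimes (k-s)}\big)$. Finally, Theorem \ref{WalledBrauercentraliser} identifies $\End_\mathfrak{gl}\big(V_\mathfrak{gl}^{\otimes s} \otimes (V_\mathfrak{gl}^*)^{\otimes (k-s)}\big)$ with the walled Brauer algebra $Br_{s,k-s}[n]$ (here one uses $s + (k-s) = k \leq n$, which is the standing hypothesis, to invoke faithfulness of $\pi$, though for the equality of the centraliser with $\pi(WBr)$ faithfulness is not even needed). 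Assembling these pieces gives the stated formula.

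The one genuinely delicate point — the ``main obstacle,'' though it is minor — is to be careful that the isomorphisms $V_w \cong V_\mathfrak{gl}^{\otimes s} \otimes (V_\mathfrak{gl}^*)^{\otimes (k-s)}$ are implemented by the symmetric-group (place-permutation) operators, so that the identification of $\End_\mathfrak{gl}(V^{\otimes k})$ with $\bigoplus_s \End(\mathbb{C}^{\binom{k}{s}}) \otimes Br_{s,k-s}[n]$ is compatible with the way the walled Brauer generators sit inside the ambient Brauer algebra $Br_k$. In other words, one should check that conjugating the walled Brauer diagram action on the sorted module by the sorting permutations produces exactly the maps one expects on $V^{\otimes k}$, so that the direct-sum decomposition is an algebra isomorphism and not merely a vector-space isomorphism. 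This amounts to the observation that $s_{i,i+1}$ and the projections onto trivial submodules of adjacent pairs of the form $V_\mathfrak{gl} \otimes V_\mathfrak{gl}^*$ (or $V_\mathfrak{gl}^* \otimes V_\mathfrak{gl}$) are exactly the operators generating each block, and this is routine once the bookkeeping of which tensor slots carry $V_\mathfrak{gl}$ versus $V_\mathfrak{gl}^*$ is set up. I would present this bookkeeping briefly and then invoke Theorem \ref{WalledBrauercentraliser} blockwise.
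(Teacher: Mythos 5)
Your proposal follows essentially the same route as the paper's proof: decompose $V^{\otimes k}$ as $\bigoplus_s \mathbb{C}^{\binom{k}{s}} \otimes V_\mathfrak{gl}^{\otimes s} \otimes (V_\mathfrak{gl}^*)^{\otimes(k-s)}$, kill the cross terms via Remark \ref{homspacezero}, and identify each diagonal block with $\End(\mathbb{C}^{\binom{k}{s}})\otimes Br_{s,k-s}[n]$ by Theorem \ref{WalledBrauercentraliser}. Your extra care about implementing the sorting isomorphisms by place permutations (so the identification is one of algebras, not just vector spaces) is a point the paper glosses over, but it does not change the argument.
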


\begin{proof} We use the decomposition 
$$V^{\otimes k} = \bigoplus_{s=0}^k {k \choose s} V_\mathfrak{gl}^{\otimes s} \otimes (V_\mathfrak{gl}^*)^{\otimes n-s}$$
to write 
$$\End_\mathfrak{gl}(V^{\otimes k}) = \bigoplus_{s=0, t= 0}^k\Hom (\mathbb{C}^{k\choose s}, \mathbb{C}^{k \choose t} )\otimes \Hom_\mathfrak{gl}(V_\mathfrak{gl}^{\otimes s} \otimes( V_\mathfrak{gl}^*)^{\otimes k-s},V_\mathfrak{gl}^{\otimes t} \otimes( V_\mathfrak{gl}^*)^{\otimes k-t}).$$

However Remark \ref{homspacezero} states that $\Hom_\mathfrak{gl}( V_\mathfrak{gl}^{\otimes s} \otimes (V_\mathfrak{gl}^*)^{\otimes n-s},  V_\mathfrak{gl}^{\otimes t} \otimes (V_\mathfrak{gl}^*)^{\otimes n-t})$ is zero if $s \neq t$. Therefore all the cross terms cancel out and we end up with  

$$\End_\mathfrak{gl}(V^{\otimes k}) = \bigoplus_{s=0}^k\End (\mathbb{C}^{k\choose s} )\End_\mathfrak{gl}(V_\mathfrak{gl}^{\otimes s} \otimes( V_\mathfrak{gl}^*)^{\otimes k-s}).$$

Then using Theorem \ref{WalledBrauercentraliser} to replace $\End_\mathfrak{gl}(V_\mathfrak{gl}^{\otimes s} \otimes( V_\mathfrak{gl}^*)^{\otimes k-s})$ with $Br_{s,k-s}$ gives the result.
\end{proof}

\begin{lemma}\label{dimofend}Let $V$ be the standard module for $Sp_{2n}(\mathbb{R})$, the dimension of $\End_\mathfrak{gl} (V^{\otimes k})$ is $\frac{(2k)!}{k!}$ \end{lemma}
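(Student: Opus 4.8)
The plan is to compute the dimension of $\End_{\mathfrak{gl}}(V^{\otimes k})$ directly from the decomposition established in Lemma \ref{endsplitting}. By that lemma,
$$\End_\mathfrak{gl}(V^{\otimes k}) = \bigoplus_{s=0}^k \End(\mathbb{C}^{k\choose s}) \otimes Br_{s,k-s}[n],$$
so the dimension is $\sum_{s=0}^k \binom{k}{s}^2 \dim Br_{s,k-s}[n]$. First I would invoke the fact recorded in the remark preceding this lemma that $\dim WBr_{s,t} = (s+t)!$ (the vector-space isomorphism with $\mathbb{C}[S_{s+t}]$ obtained by flipping the first $s$ top vertices to the bottom). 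Hence each summand contributes $\binom{k}{s}^2 \cdot k!$, and the total is
$$\dim \End_\mathfrak{gl}(V^{\otimes k}) = k! \sum_{s=0}^k \binom{k}{s}^2.$$

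The remaining step is the binomial identity $\sum_{s=0}^k \binom{k}{s}^2 = \binom{2k}{k}$, which is the Vandermonde convolution (equivalently, count the ways to choose $k$ objects from a set of $2k$ split into two halves of size $k$). Combining, $\dim \End_\mathfrak{gl}(V^{\otimes k}) = k! \binom{2k}{k} = k! \cdot \frac{(2k)!}{k!\,k!} = \frac{(2k)!}{k!}$, as claimed.

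There is essentially no obstacle here: the content is entirely packaged in Lemma \ref{endsplitting} and Theorem \ref{WalledBrauercentraliser} (which gives $\pi(Br_{s,k-s}[n])$ faithfully realizing the endomorphism algebra, valid since $s + (k-s) = k \leq n$), together with the elementary dimension count of the walled Brauer algebra. The only point worth stating carefully is that the hypothesis $k \leq n$ guarantees $s + (k-s) \leq n$ for every $s$, so Theorem \ref{WalledBrauercentraliser} applies on the nose to each block and $\dim \End_\mathfrak{gl}(V_\mathfrak{gl}^{\otimes s} \otimes (V_\mathfrak{gl}^*)^{\otimes k-s}) = \dim WBr_{s,k-s}[n] = k!$. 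I would write the proof as the three displayed equalities above followed by one sentence citing Vandermonde.
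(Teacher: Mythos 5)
Your proof is correct and follows exactly the paper's own argument: apply Lemma \ref{endsplitting}, use $\dim WBr_{s,k-s}=(s+(k-s))!=k!$, and conclude with the identity $\sum_{s=0}^k\binom{k}{s}^2=\binom{2k}{k}$ to get $k!\binom{2k}{k}=\frac{(2k)!}{k!}$. Your added remark that $k\leq n$ ensures Theorem \ref{WalledBrauercentraliser} applies faithfully to each block is a point the paper leaves implicit, but the route is the same.
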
 

\begin{proof} Using Lemma \ref{endsplitting} we have that the dimension of 
$\End_\mathfrak{gl}(V^{\otimes k}$ is 
$$\sum_{s=0}^k {k \choose s}^2 \Dim (Br_{k-s,s})$$
$$= \sum_{s=0}^k {k \choose s}^2 (k-s+s)!$$
then using the fact that $\sum_{i=0}^k {k \choose i}^2 = {2k \choose k}$ we obtain 
$$= k! {2k \choose k} = \frac{ (2k)!}{k!}.$$
\end{proof}

\begin{corollary} The map $\Phi$ from $Br_{k,2}[-n,0]$ to $\End_{\mathfrak{gl}} (V^{\otimes k})$ is an isomorphism for $k \leq n$. \end{corollary}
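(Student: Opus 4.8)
The plan is to combine the injectivity result already established in Section~\ref{injection} with the dimension count just proved. By the lemma immediately before Theorem~\ref{cyclbrauerincentraliser} (and its proof), the map $\Phi\colon Br_{k,2}[-n,0] \to \End_{\mathfrak{k}}(V^{\otimes k})$ is injective for $k \leq n$; since $\mathfrak{k} = \mathfrak{gl}_n$ here, this means $\Phi$ is an injective algebra homomorphism into $\End_{\mathfrak{gl}}(V^{\otimes k})$. So it suffices to check that the source and target have the same dimension over $\mathbb{C}$.

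First I would recall that the dimension of $Br_{k,2}$ is $\frac{(2k)! \, 2^k}{k! \, 2^k} = \frac{(2k)!}{k!}$, as noted at the start of the subsection on injectivity (the general formula $\frac{(2k)! m^k}{k! 2^k}$ with $m = 2$). Then I would invoke Lemma~\ref{dimofend}, which states precisely that $\dim \End_{\mathfrak{gl}}(V^{\otimes k}) = \frac{(2k)!}{k!}$. Since an injective linear map between finite-dimensional vector spaces of equal dimension is a bijection, $\Phi$ is a linear isomorphism; being also an algebra homomorphism, it is an algebra isomorphism.

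I would also remark, for completeness, that Theorem~\ref{cyclbrauerincentraliser} identifies the image of $\Phi$ with the subalgebra of $\End_{\mathfrak{k}}(V^{\otimes k})$ generated by the Brauer algebra $Br_k$ and the operators $\xi_i$; the content of this corollary is that, when $k \leq n$, this subalgebra is in fact all of $\End_{\mathfrak{k}}(V^{\otimes k})$, i.e.\ the generators $s_{i,i+1}$, $pr_{i,i+1}$, $\xi_i$ already exhaust the centraliser. Thus the corollary is the promised compact Schur--Weyl duality for $Sp_{2n}(\mathbb{R})$ (the first main theorem of the paper).

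There is essentially no obstacle remaining: the two hard inputs — injectivity of $\Phi$ (Section~\ref{injection}) and the dimension formula for $\End_{\mathfrak{gl}}(V^{\otimes k})$ (Lemma~\ref{dimofend}, which in turn rests on the decomposition $V \cong V_{\mathfrak{gl}} \oplus V_{\mathfrak{gl}}^*$, the vanishing of cross Hom-spaces in Remark~\ref{homspacezero}, the walled Brauer centraliser Theorem~\ref{WalledBrauercentraliser}, and the Vandermonde identity $\sum_i \binom{k}{i}^2 = \binom{2k}{k}$) — have already been done. The only mild subtlety to flag is the hypothesis $k \leq n$: it is needed both for faithfulness of $\Phi$ and, via $s + t \leq n$ for each $s \leq k$, for the faithfulness in Theorem~\ref{WalledBrauercentraliser} underlying Lemma~\ref{endsplitting}. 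So the proof is just: injective $+$ equal finite dimension $\Rightarrow$ isomorphism.
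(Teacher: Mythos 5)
Your proof is correct and follows exactly the paper's own argument: injectivity of $\Phi$ from Section~\ref{injection} combined with the dimension count $\dim Br_{k,2}[-n,0] = \frac{(2k)!}{k!} = \dim \End_{\mathfrak{gl}}(V^{\otimes k})$ from Lemma~\ref{dimofend} forces $\Phi$ to be an isomorphism. The extra remarks you add about the image and the role of the hypothesis $k \leq n$ are accurate but not needed beyond what the paper itself records.
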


\begin{proof} We have already shown that $\Phi$ is injective. Also the dimension of $Br_{k,2}[-n,0]$ is $\frac{(2k)!}{k!}$ which by Lemma \ref{dimofend} is equal to the dimension of $\End_{\mathfrak{gl}}(V^{\otimes k})$.
\end{proof}

\begin{theorem} Let $G = Sp_{2n}(\mathbb{R})$with maximal compact subgroup $K = U(n)$, and $\mathfrak{k} = Lie(K)$. When $k \leq n$ there is a natural isomorphism between $\End_K(V^{\otimes k})$ and the cyclotomic Brauer algebra $Br_{k,2}[-n,0]$ \end{theorem}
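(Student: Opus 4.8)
The plan is to deduce the theorem about $\End_K(V^{\otimes k})$ from the already-established corollary that $\Phi\colon Br_{k,2}[-n,0]\to\End_{\mathfrak{gl}}(V^{\otimes k})$ is an isomorphism for $k\le n$. The only gap is that the corollary concerns the centraliser of the \emph{Lie algebra} $\mathfrak{k}=\mathfrak{gl}_n$, while the theorem concerns the centraliser of the \emph{group} $K=U(n)$. So the heart of the argument is the identity
\[
\End_K(V^{\otimes k}) = \End_{\mathfrak{k}}(V^{\otimes k}),
\]
after which the theorem is immediate: $\End_K(V^{\otimes k})=\End_{\mathfrak{gl}}(V^{\otimes k})\cong Br_{k,2}[-n,0]$, and the isomorphism is $\Phi$, which is natural since it is defined uniformly on the diagram generators via $s_{i,i+1}$, $pr_{i,i+1}$, $\xi_i$.

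First I would argue $\End_K(V^{\otimes k})=\End_{\mathfrak{k}}(V^{\otimes k})$. One inclusion is formal: differentiating a $K$-equivariant endomorphism shows every element of $\End_K(V^{\otimes k})$ commutes with $\mathfrak{k}_0$, hence with its complexification $\mathfrak{k}=\mathfrak{gl}_n$, so $\End_K(V^{\otimes k})\subseteq\End_{\mathfrak{k}}(V^{\otimes k})$. For the reverse inclusion I would use that $U(n)$ is a connected (compact) Lie group: an endomorphism $\phi$ commuting with the action of $\mathfrak{k}_0$ commutes with $\exp(X)$ for all $X\in\mathfrak{k}_0$ (since $\phi\,\rho(\exp X)=\phi\sum_j \rho(X)^j/j! = \rho(\exp X)\,\phi$ on the tensor power), and the image of $\exp$ generates $U(n)$ because it is connected. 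Hence $\phi\in\End_K(V^{\otimes k})$. I should note the mild subtlety that the $\mathfrak{k}$-action on $V^{\otimes k}$ is the differentiated (Leibniz) action $\rho^k(X)=\sum_j (X)_j$, so that commuting with $\mathfrak{k}$ is genuinely the same as commuting with the group action; this is exactly the computation already performed in the proof of Lemma~\ref{xicommutes}.

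Having established $\End_K(V^{\otimes k})=\End_{\mathfrak{gl}}(V^{\otimes k})$, the corollary preceding the theorem gives $\End_{\mathfrak{gl}}(V^{\otimes k})\cong Br_{k,2}[-n,0]$ via $\Phi$ for $k\le n$, and we are done. I would also remark that naturality means: $\Phi$ intertwines the obvious tower structure (inclusions $Br_{k,2}\hookrightarrow Br_{k+1,2}$ correspond to $V^{\otimes k}\hookrightarrow V^{\otimes k+1}$ by tensoring with the identity on $V$) and is compatible with the generators $t_{i,i+1}\mapsto s_{i,i+1}$, $e_{i,i+1}\mapsto -n\,pr_{i,i+1}$, $\theta_i\mapsto\xi_i$ coming from intrinsic data ($\mathfrak{g}$-module projections and the Cartan involution).

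The main obstacle is really the group-versus-Lie-algebra comparison, and within that the only genuine point is connectedness of $K=U(n)$ (so that the Lie algebra "sees" all of $K$); everything else is bookkeeping. If one worried about $K$ being disconnected this step would fail, but $U(n)$ is connected, so the argument goes through cleanly. A secondary, purely expository concern is making sure the word ``natural'' is justified rather than just asserted — but since $\Phi$ was constructed generator-by-generator from canonical operators ($s_{i,i+1}$, the $\mathfrak g$-invariant projection $pr_{i,i+1}$, and the Cartan element $\xi_i$), functoriality in $k$ is transparent and needs at most a sentence.
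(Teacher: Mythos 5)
Your proposal is correct and follows essentially the same route as the paper: the paper's entire proof is the one line ``The map $\Phi$ is an isomorphism,'' i.e.\ it invokes the immediately preceding corollary that $\Phi\colon Br_{k,2}[-n,0]\to\End_{\mathfrak{gl}}(V^{\otimes k})$ is an isomorphism for $k\le n$, tacitly identifying $\End_K$ with $\End_{\mathfrak{k}}$. Your only addition is to spell out that identification via connectedness of $U(n)$, which is a correct (and welcome) filling-in of a step the paper leaves implicit.
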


\begin{proof} The map $\Phi$ is an isomorphism.\end{proof}
\end{subsection}

\begin{subsection}{The orthogonal groups}
 In this section we fix $G = SO(p,q)$. $\mathfrak{g}_0= \mathfrak{so}(p,q)$, $\mathfrak{g}= \mathfrak{so}_{p+q}$ with $p+q$ odd, we fix a maximal compact subgroup of $SO(p,q)$ to be $SO(p) \times SO(q)$ diagonaly embedded in to $G$, $\mathfrak{k} = \mathfrak{so}_p \oplus \mathfrak{so}_q$. We fix $V=\mathbb{C}^{p+q}$ to be the standard representation of $SO(p,q)$, similarly $V_p= \mathbb{C}^p$ and $V_q= \mathbb{C}^q$ are standard representations for $SO(p)$ and $SO(q)$. We abuse notation and write the $SO(p) \times SO(q)$ module $V_p \otimes \mathbbm{1}$ as $V_p$, and do the same for $\mathbbm{1}\otimes V_q$ as $V_q$.
 
 \begin{definition} An uneven Brauer diagram from $s$ vertices to $t$ vertices is a pairing of  $s+t$ vertices, we arrange the diagram to have $s$ top vertices and $t$ bottom vertices. If $s+t$ is even there are $\Dim (Br_{\frac{s+t}{2}})$ diagrams, if $s+t$ is odd then there are none.
 \end{definition}
 The above uneven Brauer diagrams are similar to the dangles used in \cite{BCV12}.
 
 \begin{example} Examples of Brauer diagrams from $6$ to $4$ vertices and $5$ to $7$

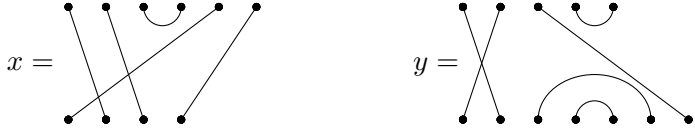
\begin{figure}[ht]
\centerline{
$x=$
\begin{minipage}{45mm}\begin{tikzpicture}[scale=0.5]
  \foreach \x in {0.5,1.5,...,5.5}
  \foreach \y in {0.5,1.5,...,3.5}
    {\fill (\x,3) circle (3pt);
     \fill (\y,0) circle (3pt);}
\begin{scope}
     \draw[postaction={decorate}] (3.5,0) -- (5.5,3);    
     \draw[postaction={decorate}] (1.5,3) -- (2.5,0); 
       \draw[postaction={decorate}]  (0.5,3) -- (1.5,0);    
       \draw[postaction={decorate}] (2.5,3) arc (180:360:0.5 and 0.5);
     \draw[postaction={decorate}] (0.5,0) -- (4.5,3);
  \end{scope}    
\end{tikzpicture}\end{minipage}
$y=$\begin{minipage}{45mm}\begin{tikzpicture}[scale=0.5]
\foreach \x in {0.5,1.5,...,4.5}
\foreach \y in {0.5,1.5,...,6.5}  
    {\fill (\x,3) circle (3pt);
     \fill (\y,0) circle (3pt);}
\begin{scope}
     \draw[postaction={decorate}] (1.5,3) -- (0.5,0);
  \end{scope}   
\begin{scope}
   \draw[postaction={decorate}] (2.5,3) -- (6.5,0);  
       \draw[postaction={decorate}]  (0.5,3) -- (1.5,0);      
    \draw[postaction={decorate}] (3.5,0) arc (180:360:0.5 and -0.5);  t
    \draw[postaction={decorate}] (3.5,3) arc (180:360:0.5 and 0.5);   
     \draw[postaction={decorate}] (2.5,0) arc (180:360:1.5 and -1.2);
  \end{scope}    
\begin{scope}[decoration={
    markings,
    mark= at position 0.2 with{\fill circle (1pt);},
    mark=at position 0.5 with {\arrow{>}}
    }
    ]       
  \end{scope}
\begin{scope}[decoration={
    markings,
    mark= at position 0.2 with{\fill circle (1pt);},
    mark=at position 0.7 with {\arrow{>}},
    mark= at position 0.35 with{\fill circle (1pt);}}
    ]
  \end{scope}
\end{tikzpicture}\end{minipage}}
 \caption{Two uneven Brauer diagrams}
\label{xyelts5}
\end{figure}

\end{example}

 \begin{lemma}\label{homspacebrauer} For $p\geq \max(s,t)$ The Hom space $\Hom_{\mathfrak{sp}_p}(V_p^{\otimes s}, V_p^{\otimes t})$ is in correspondence with vector space spanned by Brauer diagrams from $s$ vertices to $t$ vertices.

\end{lemma}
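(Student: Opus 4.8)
The plan is to deduce this from the classical (non-walled) Brauer duality, Theorem~\ref{Brauarcentraliser}, applied to $SO(p)$ in place of $SO_{2n+1}(\mathbb{C})$. Note first that ``$\mathfrak{sp}_p$'' in the statement should read $\mathfrak{so}_p$ (or $\mathfrak{so}(p)$), consistent with the running notation $\mathfrak{k} = \mathfrak{so}_p \oplus \mathfrak{so}_q$; this is what I will prove. The key point is that a $\Hom$ space between two tensor powers of the same module can be rewritten as an $\End$ of a single tensor power, because $V_p$ is self-dual as an orthogonal representation. Concretely, the form on $V_p$ gives a $\mathfrak{so}_p$-isomorphism $V_p \cong V_p^*$, hence a natural isomorphism $\Hom_{\mathfrak{so}_p}(V_p^{\otimes s}, V_p^{\otimes t}) \cong (V_p^{\otimes s})^* \otimes V_p^{\otimes t} \cong \Hom_{\mathfrak{so}_p}(\mathbbm{1}, V_p^{\otimes(s+t)})$, i.e. the space of $\mathfrak{so}_p$-invariants in $V_p^{\otimes(s+t)}$.

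First I would record that $\Hom_{\mathfrak{so}_p}(\mathbbm{1}, V_p^{\otimes(s+t)}) = 0$ when $s+t$ is odd (the invariant tensors of an odd number of vector-representation factors vanish for $\mathfrak{so}_p$), matching the ``if $s+t$ is odd then there are none'' clause in the definition of uneven Brauer diagrams. When $s+t=2r$ is even, I would invoke the First Fundamental Theorem of invariant theory for the orthogonal group — or equivalently Theorem~\ref{Brauarcentraliser} with $k=r$ and the rank bound $r \le p$, which forces $\pi$ to be faithful so that $\End_{\mathfrak{so}_p}(V_p^{\otimes r}) \cong Br_r[p]$ has a basis of Brauer diagrams on $2r$ vertices. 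Using the self-duality isomorphism again to ``bend'' $t$ of the $2r$ strands from the top row of a size-$r$ Brauer diagram down to the bottom, one gets exactly the set of uneven Brauer diagrams from $s$ vertices to $t$ vertices, and this bending is a linear bijection onto a basis of $\Hom_{\mathfrak{so}_p}(V_p^{\otimes s}, V_p^{\otimes t})$. The rank hypothesis needed is $p \ge r = \tfrac{s+t}{2}$; since the lemma hypothesizes $p \ge \max(s,t)$, which implies $p \ge \tfrac{s+t}{2}$, we are safe.

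So the steps in order are: (1) establish the self-duality $V_p \cong V_p^*$ and the resulting natural isomorphisms $\Hom_{\mathfrak{so}_p}(V_p^{\otimes s}, V_p^{\otimes t}) \cong (V_p^{\otimes(s+t)})^{\mathfrak{so}_p}$ and, on the diagram side, ``bending down'' gives a bijection between Brauer diagrams on $s+t$ vertices arranged $r$-over-$r$ and uneven diagrams arranged $s$-over-$t$; (2) handle the odd case by vanishing of invariants; (3) in the even case apply Theorem~\ref{Brauarcentraliser} (with the rank bound) to identify $\End_{\mathfrak{so}_p}(V_p^{\otimes r})$ with the span of Brauer diagrams on $2r$ vertices, and transport this basis through the isomorphism of step (1); (4) check the bending bijection respects the through-strand/horizontal-strand structure so the image is literally the set of uneven diagrams as defined. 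The main obstacle — really the only nontrivial point — is verifying that faithfulness holds under the stated hypothesis, i.e. that $p \ge \max(s,t)$ suffices; this reduces to $p \ge \tfrac{s+t}{2}$, which is immediate, but one should be careful that Brauer's faithfulness bound in Theorem~\ref{Brauarcentraliser} is stated as $k \le n$ with $n$ the rank, and for $\mathfrak{so}_p$ with $p$ odd (resp. even) the rank is $\tfrac{p-1}{2}$ (resp. $\tfrac{p}{2}$), so a cleaner route is to quote the First Fundamental Theorem directly, which gives the spanning statement for all $p$ and injectivity of the diagram-to-operator map precisely when $p \ge r$. I would phrase the final argument in terms of the FFT to avoid parity bookkeeping with the rank.
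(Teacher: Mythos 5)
Your proof is correct in substance and is more complete than the argument the paper actually gives, though it is organized differently. The paper's proof is a two-line sketch: it takes $s \ge t$ without loss of generality, considers ordinary Brauer diagrams in $Br_s$ acting on $V_p^{\otimes s}$, and claims that composing with a fixed identification of $V_p^{\otimes t}$ inside $V_p^{\otimes s}$ gives a surjection onto the Hom space depending only on the diagram modulo the pairing of the excess $s-t$ bottom vertices, so that uneven diagrams parametrize the image. Your route instead bends all strands to one side, identifies $\Hom_{\mathfrak{so}_p}(V_p^{\otimes s}, V_p^{\otimes t})$ with the invariants of $V_p^{\otimes(s+t)}$ via self-duality, and quotes the First Fundamental Theorem. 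What your version buys is (i) an explicit treatment of linear independence --- the paper's argument only produces a spanning map and never invokes the hypothesis $p \ge \max(s,t)$ at all --- and (ii) an explicit reduction of that hypothesis to $p \ge \tfrac{s+t}{2}$, sidestepping the rank-versus-dimension bookkeeping in Theorem \ref{Brauarcentraliser}. You are also right that $\mathfrak{sp}_p$ in the statement is a typo for $\mathfrak{so}_p$.

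One caveat, which is a defect of the lemma as stated rather than something you introduced: the FFT you need is the one for the full orthogonal group $O(p)$, whereas the lemma asks for $\mathfrak{so}_p$-equivariance. For $SO(p)$ the invariants of $V_p^{\otimes m}$ also contain contractions against the volume form in $\Lambda^p V_p$ once $m \ge p$; for instance the cross product $V_3 \otimes V_3 \to V_3$ is a nonzero $\mathfrak{so}_3$-map with $s=2$, $t=1$, $p=3 \ge \max(s,t)$, yet there are no uneven diagrams from $2$ vertices to $1$ vertex. So your step (2) (vanishing in the odd case) and the even-case spanning claim genuinely require $s+t < p$, or $O(p)$-equivariance, not merely $p \ge \max(s,t)$. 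The paper's own proof silently has the same problem, so this does not count against your approach, but it is worth flagging if the lemma is to be used for small $p$ or $q$.
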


\begin{proof}
Wlog let $s \geq t$
Look at diagrams inside $Br_s$. There is a surjection onto $\Hom_{\mathfrak{sp}_p}(V_p^{\otimes s}, V_p^{\otimes t})$ which is independent of pairings of the bottom $s-t$ vertices. This is done by fixing $V_p^{\otimes t}$ as a subspace of $V^{\otimes s}$.
\end{proof}

\begin{remark} The standard representation $V$ of $SO(p,q)$ decomposes as $V_p \oplus V_q$ as a $\mathfrak{so}_p \oplus \mathfrak{so}_q$ module. 
The above remark can be extended to $V^{\otimes k}$ it shows that as  $\mathfrak{so}_p \oplus \mathfrak{so}_q$ modules 
$$V^{\otimes k} = \bigoplus_{s=0}^k {k \choose s} V_p^{\otimes s} \otimes V_q^{\otimes n-s}.$$
\end{remark}

\begin{lemma}\label{SOendsplitting} Let $V$ be the standard representation of $SO(p,q)$ then 
$$\End_\mathfrak{k}(V^{\otimes k}) =  \bigoplus_{s=0, t= 0}^k\Hom (\mathbb{C}^{k\choose s}, \mathbb{C}^{k \choose t} )\otimes \Hom_{\mathfrak{so}_p}(V_p^{\otimes s},V_p^{\otimes t}) \otimes \Hom_{\mathfrak{so}_q}( V_q^{\otimes k-s},V_q^{\otimes k-t}).$$
\end{lemma}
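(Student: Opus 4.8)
The plan is to decompose $V^{\otimes k}$ as a $\mathfrak{k} = \mathfrak{so}_p \oplus \mathfrak{so}_q$ module and exploit the fact that $\End_\mathfrak{k}(\cdot)$ of a direct sum is the direct sum of Hom spaces between the summands, followed by the observation that for the group $SO(p)\times SO(q)$ the Hom spaces factor as a tensor product over the two factors. First I would recall the $\mathfrak{k}$-module decomposition $V = V_p \oplus V_q$ (which comes from the diagonal embedding of $SO(p)\times SO(q)$), and take tensor powers to obtain
$$V^{\otimes k} = \bigoplus_{s=0}^k \mathbb{C}^{k\choose s}\otimes \big(V_p^{\otimes s}\otimes V_q^{\otimes k-s}\big),$$
where the multiplicity space $\mathbb{C}^{k\choose s}$ records the choice of which $s$ of the $k$ tensor slots carry a $V_p$-factor (the rest carrying a $V_q$-factor). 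This is the remark immediately preceding the statement.

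Next I would apply $\End_\mathfrak{k} = \Hom_\mathfrak{k}(-,-)$ to this decomposition. Since $\Hom$ is additive in each variable, $\End_\mathfrak{k}(V^{\otimes k})$ splits as the direct sum over all pairs $(s,t)$ with $0\le s,t\le k$ of
$$\Hom\big(\mathbb{C}^{k\choose s},\mathbb{C}^{k\choose t}\big)\otimes \Hom_\mathfrak{k}\big(V_p^{\otimes s}\otimes V_q^{\otimes k-s},\; V_p^{\otimes t}\otimes V_q^{\otimes k-t}\big),$$
the first tensor factor being the $\Hom$ between the (trivial-action) multiplicity spaces. The remaining point is to factor the $\mathfrak{k}$-equivariant Hom space as a tensor product over the two simple ideals. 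Because $\mathfrak{k} = \mathfrak{so}_p \oplus \mathfrak{so}_q$ acts on $V_p^{\otimes s}\otimes V_q^{\otimes k-s}$ with $\mathfrak{so}_p$ acting only on the $V_p$-tensorands and $\mathfrak{so}_q$ only on the $V_q$-tensorands, and similarly on the target, an external tensor product argument (equivalently, $U(\mathfrak{k}) \cong U(\mathfrak{so}_p)\otimes U(\mathfrak{so}_q)$ acting factor-wise) gives
$$\Hom_\mathfrak{k}\big(V_p^{\otimes s}\otimes V_q^{\otimes k-s}, V_p^{\otimes t}\otimes V_q^{\otimes k-t}\big) \cong \Hom_{\mathfrak{so}_p}\big(V_p^{\otimes s},V_p^{\otimes t}\big)\otimes \Hom_{\mathfrak{so}_q}\big(V_q^{\otimes k-s},V_q^{\otimes k-t}\big).$$
Substituting this into the previous display yields exactly the claimed formula.

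The main obstacle I expect is justifying cleanly the factorization of the equivariant Hom space as an external tensor product; this uses complete reducibility of the finite-dimensional modules involved (so that everything decomposes into irreducibles, and an irreducible $\mathfrak{so}_p\oplus\mathfrak{so}_q$-module is an outer tensor product of irreducibles by a standard argument) together with Schur's lemma. One must also be a little careful that, unlike in Lemma \ref{endsplitting} for the symplectic case, here the cross terms $s\neq t$ do \emph{not} vanish — the uneven Brauer diagrams of Lemma \ref{homspacebrauer} are precisely what index bases of the nonzero spaces $\Hom_{\mathfrak{so}_p}(V_p^{\otimes s},V_p^{\otimes t})$ — so the sum genuinely runs over all pairs $(s,t)$ and no simplification analogous to the symplectic case occurs. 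Beyond this, the argument is a routine application of additivity of $\Hom$ and the external tensor product of representations.
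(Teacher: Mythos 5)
Your proposal is correct and follows essentially the same route as the paper, whose proof is simply the one-line observation that the decomposition $V^{\otimes k} = \bigoplus_s {k\choose s} V_p^{\otimes s}\otimes V_q^{\otimes k-s}$ yields the stated sum of Hom spaces; you merely fill in the additivity-of-Hom and external-tensor-product/Schur's-lemma details that the paper leaves implicit. Your remark that the cross terms $s\neq t$ do not vanish here (in contrast to the symplectic case) is also consistent with how the paper uses this lemma in the subsequent dimension count.
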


\begin{proof} We use the decomposition 

$$V^{\otimes k} = \bigoplus_{s=0}^k {k \choose s} V_p^{\otimes s} \otimes V_q^{\otimes n-s}.$$
to write $$\End_\mathfrak{k}(V^{\otimes k}) =  \bigoplus_{s=0, t= 0}^k\Hom (\mathbb{C}^{k\choose s}, \mathbb{C}^{k \choose t} )\otimes \Hom_{\mathfrak{so}_p}(V_p^{\otimes s},V_p^{\otimes t}) \otimes \Hom_{\mathfrak{so}_q}( V_q^{\otimes k-s},V_q^{\otimes k-t}).$$

\end{proof}

\begin{lemma} Let $V$ be the standard mofule for $SO(p,q)$, the dimension of $\End_\mathfrak{k} (V^{\otimes k})$ is $\frac{(2k)!}{k!}$ \end{lemma}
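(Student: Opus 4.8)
The plan is to compute $\dim \End_{\mathfrak{k}}(V^{\otimes k})$ using the block decomposition of Lemma \ref{SOendsplitting} together with the combinatorial description of the relevant $\mathfrak{so}$-Hom spaces. First I would invoke Lemma \ref{SOendsplitting} to write
$$\dim \End_{\mathfrak{k}}(V^{\otimes k}) = \sum_{s,t=0}^{k} \binom{k}{s}\binom{k}{t} \dim \Hom_{\mathfrak{so}_p}(V_p^{\otimes s}, V_p^{\otimes t}) \cdot \dim \Hom_{\mathfrak{so}_q}(V_q^{\otimes k-s}, V_q^{\otimes k-t}).$$
By Lemma \ref{homspacebrauer} (in its orthogonal form, valid since $k \le n \le \min(p,q)$ gives the necessary rank bound), $\dim \Hom_{\mathfrak{so}_p}(V_p^{\otimes s}, V_p^{\otimes t})$ equals the number of uneven Brauer diagrams from $s$ to $t$ vertices, which is $0$ when $s+t$ is odd and $\dim Br_{(s+t)/2}$ when $s+t$ is even; likewise for the $q$-factor with $k-s$ and $k-t$. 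Note $(k-s)+(k-t) = 2k - s - t$ has the same parity as $s+t$, so both factors vanish or both survive simultaneously; the surviving terms are exactly those with $s \equiv t \pmod 2$.

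Next I would evaluate the resulting double sum. Setting $a = (s+t)/2$ and $b = k - (s+t)/2$ whenever $s + t = 2a$, the contribution is $\binom{k}{s}\binom{k}{t} \dim Br_a \cdot \dim Br_b = \binom{k}{s}\binom{k}{t}(2a-1)!!\,(2b-1)!!$ — but I think it is cleaner to proceed by a generating-function or bijective argument rather than resumming double factorials directly. The cleanest route: recognize the right-hand side of Lemma \ref{SOendsplitting} as computing $\dim \Hom_{\mathfrak{so}_p \oplus \mathfrak{so}_q}(V^{\otimes k}, V^{\otimes k})$ where $V = V_p \oplus V_q$, and compare with an independent count. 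Since the map $\Phi$ is already known to be injective (Lemma in Section \ref{injection}) and $\dim Br_{k,2} = \tfrac{(2k)!}{k!}$, it suffices to show the dimension above is \emph{at most} $\tfrac{(2k)!}{k!}$; but in fact equality will drop out, paralleling Lemma \ref{dimofend}. I would therefore mimic the symplectic computation: the number of uneven Brauer diagrams from $s$ to $t$ vertices equals the number of diagrams in $Br_{(s+t)/2}$, i.e. perfect matchings on $s+t$ points, and the double sum telescopes via the identity used in Lemma \ref{dimofend}, namely $\sum_s \binom{k}{s}^2 = \binom{2k}{k}$, once one checks the cross terms organize into matchings on $2k$ points.

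The main obstacle I anticipate is the combinatorial bookkeeping in the double sum: unlike the symplectic case where Remark \ref{homspacezero} forced $s = t$ and collapsed everything to a single sum, here the off-diagonal terms $s \ne t$ (with $s \equiv t \bmod 2$) genuinely contribute, so I must verify that the full double sum still evaluates to $\tfrac{(2k)!}{k!}$. I would handle this by a direct bijection: an endomorphism block datum $(i \in \binom{[k]}{s},\ j \in \binom{[k]}{t},\ D_p,\ D_q)$ — a choice of which tensor slots are ``$p$-type'' on top and bottom, plus a Brauer matching on the $p$-slots and one on the $q$-slots — is equivalent to a single perfect matching on the $2k$ vertices of a Brauer diagram of rank $k$ together with a $2$-colouring of its strands that is constant along each strand and respects the top/bottom slot assignments; counting these gives $\tfrac{(2k)!}{k!} \cdot$ (number of valid colourings per diagram, which works out to $1$ after accounting for the $\binom{k}{s}$ choices), reproducing $\dim Br_{k,2}$. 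Once the dimension count matches, the statement follows; I would close by remarking that combined with injectivity of $\Phi$ this also yields the isomorphism $\End_{\mathfrak{k}}(V^{\otimes k}) \cong Br_{k,2}[n, p-q]$, as in the symplectic subsection.
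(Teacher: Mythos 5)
Your proposal follows the paper's skeleton: Lemma \ref{SOendsplitting} plus the uneven-Brauer-diagram count of Lemma \ref{homspacebrauer} reduce everything to the double sum $\sum_{s,t}\binom{k}{s}\binom{k}{t}\dim Br_{(s+t)/2}\cdot\dim Br_{k-(s+t)/2}$ over pairs with $s\equiv t \pmod 2$. Where you diverge is in evaluating that sum. The paper substitutes $\dim Br_m=\frac{(2m)!}{m!\,2^m}$ and grinds through binomial coefficients, using Vandermonde in the form $\sum_s\binom{k}{s}\binom{k}{2r-s}=\binom{2k}{2r}$ --- not $\sum_s\binom{k}{s}^2=\binom{2k}{k}$, which is the identity you cite from Lemma \ref{dimofend} and which does not apply here, since the off-diagonal terms $s\neq t$ genuinely survive. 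Your bijective alternative, however, is correct and arguably cleaner once stated precisely: a block datum $(S,T,D_p,D_q)$ is exactly a perfect matching on $2k$ points together with a $2$-colouring of its $k$ edges (constant along each edge, with $S,T$ recovered as the $p$-coloured top and bottom points; the parity constraint $s+t$ even is automatic), and there are $(2k-1)!!\cdot 2^k=\frac{(2k)!}{k!}$ such objects --- precisely the marked Brauer diagrams spanning $Br_{k,2}$. This makes the agreement with $\dim Br_{k,2}$ transparent rather than coincidental, which is a real advantage over the paper's computation.

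Two slips to fix. First, your parenthetical ``number of valid colourings per diagram, which works out to $1$'' garbles the count: each matching admits $2^k$ colourings, and the product $(2k-1)!!\cdot 2^k$ is already the answer; nothing needs to ``work out to $1$.'' Second, your rank bound ``$k\le n\le\min(p,q)$'' is false: since $p+q=2n+1$ with $p\ge q$, one has $\min(p,q)=q\le n$, and $q$ may be small. For the diagram count to equal $\dim\Hom_{\mathfrak{so}_q}(V_q^{\otimes k-s},V_q^{\otimes k-t})$ one needs $q$ large relative to $k$, and the paper is equally silent on this hypothesis, so this is a shared gap rather than one you introduced --- but you should not paper over it with an inequality that points the wrong way.
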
 

\begin{proof} Using Lemma \ref{SOendsplitting} we have that the dimension of 
$\End_\mathfrak{k}(V^{\otimes k})$ is 
$$\sum_{s,t=0}^k {k \choose s}{t \choose t} \Dim (\Hom_{\mathfrak{so}_p}(V_p^{\otimes s},V_p^{\otimes t})) \times \Dim( \Hom_{\mathfrak{so}_q}( V_q^{\otimes k-s},V_q^{\otimes k-t})))$$
then using Lemma \ref{homspacebrauer} the fact that $\Hom$ spaces have the same dimension as Brauer algebras we obtain
$$\sum_{s,t=0}^k {k \choose s}{t \choose t} \Dim (Br_{\frac{s+t}{2}}) \times \Dim(Br_{k - \frac{s+t}{2}})$$
$$= \sum_{s=0}^k \sum_{t=0, s+t \text{ even} }^{0} {k \choose s}{k \choose t}\frac{(s+t)!}
{\frac{(s+t)}{2}!2^{\frac{s+t}{2}}}
\frac{(2k-(s +t))!}{(k-\frac{s+t}{2})!2^{k-\frac{s+t}{2}}}.$$

$$= \sum_{s=0}^k \sum_{t=0, s+t \text{ even} }^{k} {k \choose s}{k \choose t} \frac{{k \choose \frac{s+t}{2}}}{{2k \choose s+t}} \frac{k!}{(2k)!2^k}.$$
Rearranging the sum we find

$$=    \frac{k!}{(2k)!2^k}\sum_{r=0 =\frac{s+t}{2}}^{k}\frac{{k \choose r}}{{2k \choose 2r}} \sum_{s=0}^k{k \choose s}{k \choose 2r-s}.$$

Then using $\sum_{s=0}^{k} {k \choose s}{k \choose m-s} = {2k \choose m}$

$$=    \frac{k!}{(2k)!2^k}\sum_{r=0}^{k}\frac{{k \choose r}}{{2k \choose 2r}} {2k \choose 2r}.$$

$$=    \frac{k!}{(2k)!2^k}\sum_{r=0}^{k}\frac{{k \choose r}}{{2k \choose 2r}} {2k \choose 2r}.$$

$$=    \frac{k!}{(2k)!2^k}\sum_{r=0}^{k} {k \choose r}.$$
Using $\sum_{r=0}^{k} {k \choose r} =2^k$, we arrive at $\frac{k!}{(2k)}$ as desired.
\end{proof}

\begin{corollary} The map $\Phi$ from $Br_{k,2}[n,p-q]$ to $\End_{\mathfrak{so}_p \oplus \mathfrak{so}_q} (V^{\otimes k})$ is an isomorphism for $k \leq p+q$, with $p+q$ odd. \end{corollary}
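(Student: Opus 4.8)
The plan is to combine the three ingredients already in place: the injectivity of $\Phi$ (proved in Section \ref{injection}), the dimension count just carried out for $\End_{\mathfrak{so}_p\oplus\mathfrak{so}_q}(V^{\otimes k})$, and the dimension of the cyclotomic Brauer algebra $Br_{k,2}[\delta_0,\delta_1]$. Since $\Phi$ is a well-defined algebra homomorphism from $Br_{k,2}[n,p-q]$ into $\End_{\mathfrak{k}}(V^{\otimes k})$ (Section \ref{injection}, with the correct specialization $\delta_0=\lfloor\tfrac{p+q}2\rfloor=n$ and $\delta_1=p-q$ forced by the two computational lemmas), and since injectivity holds for $k\le\rank(G)=n$ (equivalently $k\le p+q$ with the rank bound), it suffices to check that source and target have the same finite dimension. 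The source has dimension $\tfrac{(2k)!}{k!}$ by the formula $\dim Br_{k,m}=\tfrac{(2k)!m^k}{k!2^k}$ specialized to $m=2$; the target has dimension $\tfrac{(2k)!}{k!}$ by the lemma immediately preceding this corollary. An injective linear map between finite-dimensional spaces of equal dimension is an isomorphism, and since $\Phi$ is already known to be an algebra map, it is an algebra isomorphism.

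Concretely, I would write the proof in three short steps. First, invoke the lemma of Section \ref{injection} (the "$\Phi$ is injective for $k\le n$" lemma) to get that $\Phi$ is injective; here one should note that the rank of $SO(p,q)$ with $p+q=2n+1$ odd is $n$, so the hypothesis $k\le p+q$ in the corollary should really be read as $k\le n$ — or one states the corollary under $k\le n$ and remarks $p+q=2n+1$. Second, quote $\dim Br_{k,2}[n,p-q]=\tfrac{(2k)!}{k!}$ from Definition \ref{cyclotomicbrauerdef} and the basis of equivalence classes of labeled directed Brauer diagrams (with $m=2$, reversing direction is harmless). Third, quote the preceding lemma that $\dim\End_{\mathfrak{k}}(V^{\otimes k})=\tfrac{(2k)!}{k!}$, and conclude by the rank-nullity argument that $\Phi$ is a bijection, hence an isomorphism of algebras.

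I do not expect a genuine obstacle here, since all the substantive work has been done: the hard parts were (a) checking $\Phi$ respects the presentation relations $(1)$--$(6)$, which relied on the two lemmas computing the action of $\mathrm{pr}_{12}\xi_1$ on $v_{\mathbbm 1}$ to pin down $\delta_1=p-q$ (and $\delta_0$ via Brauer's theorem), (b) the injectivity argument separating markings on through-strands, and (c) the combinatorial dimension computation for $\End_{\mathfrak{k}}(V^{\otimes k})$ using the decomposition $V=V_p\oplus V_q$ and the Vandermonde-type identities. The only point needing a word of care in the write-up is the indexing/range discrepancy ($k\le p+q$ versus $k\le n$, and the appearance of $\lfloor\tfrac{p+q}2\rfloor=n$ as $\delta_0$); I would simply state the result for $k\le n$ and note $p+q=2n+1$ so that $\rank(SO(p,q))=n$, keeping the statement consistent with Theorem \ref{cyclbrauerincentraliser} and with the injectivity lemma. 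After this corollary one then immediately gets the stated Theorem for $O(p,q)$ (the $\mathfrak{k}$-centraliser equals the $K$-centraliser since $K$ is connected here, or by the usual Lie-theoretic passage between $K$ and $\mathfrak{k}$ on finite-dimensional modules).
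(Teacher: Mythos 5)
Your proposal matches the paper's intended argument exactly: the paper gives no explicit proof for this corollary, but the analogous symplectic corollary is proved precisely by combining the injectivity of $\Phi$ with the equality of the two dimensions $\tfrac{(2k)!}{k!}$, and the orthogonal case is set up to conclude the same way from the preceding dimension lemma. Your observation about the range hypothesis ($k\le p+q$ as stated versus the $k\le n$ needed for the injectivity lemma, with $p+q=2n+1$) is a genuine inconsistency in the paper's statement that your suggested fix correctly resolves.
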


\end{subsection}
\end{section}

\begin{section}{Consequences}\label{conseq}
For $G = Sp_{2n}(\mathbb{R})$, and $k \leq n$ the algebra $\End_K(V^{\otimes k})$ is isomorphic to the cyclotomic Brauer algebra $Br_{k,2}$ with parameters $\delta_0= -n$ and $\delta_1 = 0$. Furthermore our workings from Section \ref{sphomspaces} show that the cyclotomic Brauer algebra $Br_{k,2}[-n,0]$ is isomorphic to a direct sum of matrix algebra tensored with walled brauer algebras.

From Brundan and Stroppel's work on the gradings of Walled Brauer algebras and Khovanov's arc algebra \cite{BS12} we can consider the cyclotomic Brauer algebra at parameters $-n$ and $0$ to be graded. This then endows the cyclotomic Brauer algebra with a grading. It would be interesting to study the graded representation theory of the cyclotomic Brauer algebra with these parameters. 

\begin{theorem} For $\delta_0=-n$ and $\delta_1= 0$ the cyclotomic Brauer algebra associated to the hyperoctahedral group $Br_{k,2}$ is isomorphic to: 

$$Br_{k,2}[-n,0] \cong \bigoplus_{s=0}^kM_{{k\choose s}\times {k \choose s}} WBr_{s, k-s}[n].$$

\end{theorem}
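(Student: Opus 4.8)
The plan is to combine the results already established for $\End_{\mathfrak{gl}}(V^{\otimes k})$ with the isomorphism $\Phi$. By the corollary in Section \ref{sphomspaces}, the map $\Phi$ gives an algebra isomorphism $Br_{k,2}[-n,0] \cong \End_{\mathfrak{gl}_n}(V^{\otimes k})$ whenever $k \le n$. On the other hand Lemma \ref{endsplitting} gives a vector space decomposition
$$\End_{\mathfrak{gl}}(V^{\otimes k}) = \bigoplus_{s=0}^k \End(\mathbb{C}^{\binom{k}{s}}) \otimes Br_{s,k-s}[n],$$
and the first step is to upgrade this to an \emph{algebra} isomorphism. This requires checking that, with respect to the decomposition $V^{\otimes k} = \bigoplus_{s=0}^k \mathbb{C}^{\binom{k}{s}} \otimes (V_{\mathfrak{gl}}^{\otimes s} \otimes (V_{\mathfrak{gl}}^*)^{\otimes k-s})$ as a $\mathfrak{gl}_n$-module, the algebra $\End_{\mathfrak{gl}}(V^{\otimes k})$ is block-diagonal: Remark \ref{homspacezero} already tells us the off-diagonal Hom spaces $\Hom_{\mathfrak{gl}}(V_{\mathfrak{gl}}^{\otimes s}\otimes (V_{\mathfrak{gl}}^*)^{\otimes k-s}, V_{\mathfrak{gl}}^{\otimes t}\otimes (V_{\mathfrak{gl}}^*)^{\otimes k-t})$ vanish for $s \ne t$, so there is no cross-talk, and the multiplication is the obvious componentwise one, $\End(\mathbb{C}^{\binom{k}{s}}) \otimes \End_{\mathfrak{gl}}(V_{\mathfrak{gl}}^{\otimes s}\otimes(V_{\mathfrak{gl}}^*)^{\otimes k-s}) = M_{\binom{k}{s}}(\mathbb{C}) \otimes Br_{s,k-s}[n]$ by Theorem \ref{WalledBrauercentraliser}. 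Concatenating,
$$Br_{k,2}[-n,0] \;\overset{\Phi}{\cong}\; \End_{\mathfrak{gl}_n}(V^{\otimes k}) \;\cong\; \bigoplus_{s=0}^k M_{\binom{k}{s}\times\binom{k}{s}}\bigl(WBr_{s,k-s}[n]\bigr),$$
which is exactly the claimed isomorphism.

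One subtlety is the parameter: the Brauer-type constant appearing in $\pi(e_{s,s+1}) = n\,pr_{s,s+1}$ in Theorem \ref{WalledBrauercentraliser} is $+n$, matching the statement's $WBr_{s,k-s}[n]$, even though the cyclotomic parameter is $\delta_0 = -n$; this is consistent because the trivial $\mathfrak{gl}_n$-submodule of $V_{\mathfrak{gl}}\otimes V_{\mathfrak{gl}}^*$ is spanned by $\sum e_i\otimes e_i^*$ (a positive-norm contraction), whereas inside $V\otimes V$ for $\mathfrak{sp}_{2n}$ the relevant invariant picks up the symplectic sign. I would include a short remark reconciling these signs, tracing how the identification $V = V_{\mathfrak{gl}}\oplus V_{\mathfrak{gl}}^*$ carries the $\mathfrak{sp}_{2n}$-invariant form to the $\mathfrak{gl}_n$-contraction pairing. (Note the excerpt's Lemma \ref{endsplitting} writes $Br_{s,k-s}[n]$ while the theorem statement writes $WBr_{s,k-s}[-n]$; I would standardize on the value dictated by Theorem \ref{WalledBrauercentraliser}, namely $[n]$, and flag that the sign is immaterial up to the isomorphism $WBr_{s,t}[\delta]\cong WBr_{s,t}[-\delta]$ obtained by negating the $e$-generators, or simply fix the convention throughout.)

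The main obstacle I anticipate is not any single hard computation but rather the bookkeeping needed to promote Lemma \ref{endsplitting} from a vector-space statement to an algebra statement with the correct identification of generators on both sides: one must verify that $\Phi$ actually respects the block structure, i.e.\ that the cyclotomic generators $t_{i,i+1}, e_{i,i+1}, \theta_i$ land in a predictable way in $\bigoplus_s M_{\binom{k}{s}}(\mathbb{C})\otimes WBr_{s,k-s}[n]$. Concretely, $\theta_i = \xi_i$ is the operator that on each summand $V_{\mathfrak{gl}}\otimes\cdots$ swaps the $i$-th tensor factor between its $V_{\mathfrak{gl}}$ and $V_{\mathfrak{gl}}^*$ incarnations (up to the scalars $\pm\sqrt{-1}$ from Remark \ref{xiaction}), hence acts purely through the matrix-algebra factors $M_{\binom{k}{s}}(\mathbb{C})$ by permuting basis vectors indexed by subsets $S\subseteq\{1,\dots,k\}$ of size $s$; meanwhile $t_{i,i+1}$ and $e_{i,i+1}$ restrict on each block either to the walled-Brauer generators (when $i,i+1$ lie on the same side of the wall determined by $S$) or act by reindexing the subsets. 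Since both algebras have dimension $\frac{(2k)!}{k!}$ (by Lemma \ref{dimofend} and the dimension count $\sum_s\binom{k}{s}^2(k-s+s)! = k!\binom{2k}{k}$), once $\Phi$ is shown to be an algebra map into the right-hand side it is automatically an isomorphism, so the whole argument reduces to this compatibility check. I would carry it out by tracking the action of each generator on a fixed basis of $V^{\otimes k}$ adapted to the $\mathfrak{gl}_n$-decomposition, which is routine but needs care with the $\sqrt{-1}$ factors.
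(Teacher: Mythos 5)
Your proposal is correct and follows essentially the same route as the paper, which gives no explicit proof beyond the remark that "our workings from Section \ref{sphomspaces} show" the result: namely, compose the isomorphism $\Phi\colon Br_{k,2}[-n,0]\to\End_{\mathfrak{gl}}(V^{\otimes k})$ from the Corollary with the block decomposition of Lemma \ref{endsplitting}. If anything you are more careful than the paper, in insisting that Lemma \ref{endsplitting} be promoted from a vector-space to an algebra isomorphism via Remark \ref{homspacezero} and in flagging the $[n]$ versus $[-n]$ parameter inconsistency that the paper itself leaves unresolved (the introduction, Lemma \ref{endsplitting}, and the theorem in Section \ref{conseq} do not agree); note also that, like the paper, your argument only establishes the claim under the standing hypothesis $k\le n$, which the theorem statement omits.
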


It is interesting that in this particular case the cyclotomic algebra breaks into smaller walled Brauer algebras so cleanly. This opens up interesting question regarding Morita equivalences. 

\begin{corollary} The cyclotomic Brauer algebra $Br_{k,2}[-n,0]$ is Morita equivalent to a direct sum of walled Brauer algebras. 
$$Br_{k,2}[-n,0] - rep - \mapsto \bigoplus_{s=0}^{k} WBr_{k,k-s}[n]-rep$$
\end{corollary}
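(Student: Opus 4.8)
The plan is to deduce the Morita equivalence directly from the structural isomorphism in the preceding theorem together with two standard facts of Morita theory. First I would record that theorem in the form
$$Br_{k,2}[-n,0] \;\cong\; \bigoplus_{s=0}^{k} M_{\binom{k}{s}}(\mathbb{C}) \otimes_{\mathbb{C}} WBr_{s,k-s}[n] \;\cong\; \bigoplus_{s=0}^{k} M_{\binom{k}{s}}\!\big(WBr_{s,k-s}[n]\big),$$
reading the notation $M_{\binom{k}{s}\times\binom{k}{s}} WBr_{s,k-s}[n]$ of the theorem as the algebra of $\binom{k}{s}\times\binom{k}{s}$ matrices with entries in the walled Brauer algebra $WBr_{s,k-s}[n]$. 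Each summand is a unital $\mathbb{C}$-algebra (it contains the relevant identity matrix), so the hypotheses needed to apply Morita theory are met, and one should also note that the index in the corollary's statement is to be read as $WBr_{s,k-s}[n]$, matching the theorem.

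Second, I would invoke the classical fact that for a unital ring $A$ and any $N\ge 1$ the matrix ring $M_N(A)$ is Morita equivalent to $A$: the idempotent $E_{11}\in M_N(A)$ is full, since $E_{11} M_N(A) E_{11}\cong A$ and $M_N(A)\,E_{11}\,M_N(A)=M_N(A)$; equivalently, the bimodules of row and column $N$-vectors over $A$ implement an equivalence $M_N(A)\text{-mod}\;\simeq\;A\text{-mod}$. Applying this with $A=WBr_{s,k-s}[n]$ and $N=\binom{k}{s}$ gives a Morita equivalence $M_{\binom{k}{s}}\!\big(WBr_{s,k-s}[n]\big)\,\sim\,WBr_{s,k-s}[n]$ for each $s$.

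Third, I would use compatibility of Morita equivalence with finite products of algebras: the module category of $\bigoplus_i A_i$ decomposes as $\prod_i (A_i\text{-mod})$, so pointwise Morita equivalences $A_i\sim B_i$ assemble to a Morita equivalence $\bigoplus_i A_i\sim\bigoplus_i B_i$. Combining the three steps yields
$$Br_{k,2}[-n,0] \;\cong\; \bigoplus_{s=0}^{k} M_{\binom{k}{s}}\!\big(WBr_{s,k-s}[n]\big) \;\sim\; \bigoplus_{s=0}^{k} WBr_{s,k-s}[n],$$
and unwinding this equivalence of module categories gives the asserted bijection between $Br_{k,2}[-n,0]$-representations and representations of $\bigoplus_{s=0}^{k} WBr_{s,k-s}[n]$, in particular on isomorphism classes of simple modules.

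I do not expect a serious obstacle: the substantive content is the matrix-algebra decomposition, which is exactly what is established in Section~\ref{sphomspaces} via Lemma~\ref{endsplitting} and Theorem~\ref{WalledBrauercentraliser}, and this corollary merely repackages it through the standard $M_N(A)\sim A$ principle. The only points needing a line of care are the two bookkeeping remarks above — identifying the notation of the theorem with $M_N$ of a walled Brauer algebra, and reconciling the index in the corollary's statement — neither of which is mathematically deep.
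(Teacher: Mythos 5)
Your argument is correct and is exactly the (implicit) route the paper takes: the corollary is stated without proof as an immediate consequence of the preceding decomposition theorem, via the standard facts that $M_N(A)$ is Morita equivalent to $A$ and that Morita equivalence is compatible with finite direct sums. Your two bookkeeping observations are also right --- the theorem's notation is to be read as a matrix algebra over the walled Brauer algebra, and the index $WBr_{k,k-s}$ in the corollary's display is a typo for $WBr_{s,k-s}$.
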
 

Due to \cite{BCV12}, the irreducible representations of $BR_{k,2}$ over $\mathbb{C}$ are in one-to-one correspondence with bipartitions $\underline{\lambda} = (\lambda_1,\lambda_2)$ of $k-2l$ for $l = 0,...,\lfloor \frac{k}{2} \rfloor$. In \cite{CVDM08} they show that the irreducible representation of $WB_{r,s}$ are in one-to-one corresondence with bipartitions $\underline{\lambda} = (\lambda_1,\lambda_2)$ such that $\lambda_1 \vdash r -l$, $\lambda_2 \vdash s-l$ for $l = 0,...,min(r,s)$. 

\begin{remark} 
The set of bipartions of $(\lambda_1,\lambda_2)$ $k -2l$ for a fixed $k$ and $l = 0,...,\lfloor \frac{k}{2} \rfloor$ is equal to the union of bipartitions $\underline{\lambda} = (\lambda_1,\lambda_2)$ such that $\lambda_1 \vdash r -l$, $\lambda_2 \vdash s-l$ for $l = 0,...,min(r,s)$ and $r+s = k$.
It seems sensible that the Morita equivalence above would sent a representation of $Br_{k,2}$ associated to $(\lambda_1,\lambda_2)$ to the representation of $WB_{r,s}$ corresponding to the same bipartition. 
\end{remark} 

Bowman, Cox and de Visscher \cite{BCV12} showed that the cyclotomic Brauer algebra has underlying combinatorics related to Brauer algebras and Walled Brauer algebras but did not show a Morita equivalence for all parameters. This Corollary gives an actual Morita equivalence, although only for particular parameters. It would be an interesting question so see if we can utilise this isomorphism to use ideas from the walled Brauer algebras to study the cyclotomic Brauer algebra or vice versa. For example Brundan and Stroppel \cite{BS12} proved that the walled Brauer algebra has a grading and is Morita equivalent to a truncation of the Khovanov-arc algebra. 

We can also utilise the standard arguments from Schur-Weyl duality. That is of the double centraliser result: 
\begin{definition} For a Lie group $G$ which is a subgroup of $GL(V)$, we say the polynomial representations are those representations that occur as a summand of $V^{\otimes k}$ for some $k\in \mathbb{N}$, and the $r^{th}$ polynomial representations are the ones occuring on $V^{\otimes r}$
\end{definition} 
For a given $(\mathfrak{g},K)$-module $X$, it's $K$-types are the direct sum $K$-submodules that occur inside $X$ restricted to $K$.  
\begin{theorem} For $G = O(p,q)$ or $Sp_{2n}(\mathbb{R})$ the $K$-types arising in the $r^{th}$ polynomial representations are in one-to-one correspondence with irreducible representation of the cyclotomic Brauer algebra $Br_{k,2}[\delta_0,\delta_1]$. \end{theorem}

\begin{remark} This Theorem then labels every $K$-type of $V^{\otimes k}$ with a bipartition $\underline (\lambda) = (\lambda_1,\lambda_2)$ of $k -2l$ for an $l =0,...,\lfloor \frac{k}{2} \rfloor$. The $\mathfrak{k}$ types are either $\mathfrak{gl}_n$ modules or $\mathfrak{so}(p)\oplus \mathfrak{so}(q)$ modules which have a rich structure of combinatorial description with respect to highest weight theory. Does this combinatorial structure agree in a nice succinct way?\end{remark}

We intend to release a further paper studying the space $\End_K(X \otimes V^{\otimes k})$ which will define a hyperoctahedral equivalent to the Nazarov-Wenzl algebra \cite{N90}. It will generalise Arakawa-Suzuki functors \cite{AS98} to $(\mathfrak{g},K)$-modules following the work of Ciubotaru and Trapa \cite{CT11}.

\end{section}

\bibliography{/home/k/google_drive/Dphil_work/bib.bib}        

\begin{thebibliography}{10}

\bibitem{AS98}
T.~Arakawa and T.~Suzuki.
\newblock Duality between $sl_n(\mathbb{C})$ and the degenerate affine {H}ecke
  algebra.
\newblock {\em Journal of Algebra}, 209(1):288 -- 304, 1998.

\bibitem{BGCMH94}
G.~Benkart, M.~Chakrabarti, T.~Halverson, R.~Leduc, C.~Lee, and J.~Stroomer.
\newblock Tensor product representations of general linear groups and their
  connections with brauer algebras.
\newblock {\em Journal of Algebra}, 166(3):529--567, 1994.

\bibitem{BCV12}
C.~Bowman, A.~Cox, and M.~De~Visscher.
\newblock Decomposition numbers for the cyclotomic brauer algebras in
  characteristic zero.
\newblock {\em Journal of Algebra}, 378:80--102, 2013.

\bibitem{B37}
R.~Brauer.
\newblock On algebras which are connected with the semisimple continuous
  groups.
\newblock {\em Annals of Mathematics}, 38(4):857--872, 1937.

\bibitem{BS12}
J.~Brundan and C.~Stroppel.
\newblock Gradings on walled brauer algebras and khovanov’s arc algebra.
\newblock {\em Advances in Mathematics}, 231(2):709--773, 2012.

\bibitem{CT11}
D.~Ciubotaru and P.~E. Trapa.
\newblock Functors for unitary representations of classical real groups and
  affine {H}ecke algebras.
\newblock {\em Advances in Mathematics}, 227(4):1585 -- 1611, 2011.

\bibitem{CT12}
D.~Ciubotaru and P.~E. Trapa.
\newblock Duality between gl (n, r), gl (n, q p), and the degenerate affine
  hecke algebra for gl (n).
\newblock {\em American Journal of Mathematics}, 134(1):141--170, 2012.

\bibitem{CVDM08}
A.~Cox, M.~De~Visscher, S.~Doty, and P.~Martin.
\newblock On the blocks of the walled brauer algebra.
\newblock {\em Journal of Algebra}, 320(1):169--212, 2008.

\bibitem{DRV12}
Z.~Daugherty, A.~Ram, and R.~Virk.
\newblock Affine and degenerate affine bmw algebras: Actions on tensor space.
\newblock {\em Selecta Mathematica}, 19, 05 2012.

\bibitem{DRV14}
Z.~Daugherty, A.~Ram, R.~Virk, et~al.
\newblock Affine and degenerate affine bmw algebras: The center.
\newblock {\em Osaka Journal of Mathematics}, 51(1):257--285, 2014.

\bibitem{DDH08}
R.~Dipper, S.~Doty, and J.~Hu.
\newblock Brauer algebras, symplectic {S}chur algebras and {S}chur-{W}eyl
  duality.
\newblock {\em Transactions of the American Mathematical Society},
  360(1):189--213, 2008.

\bibitem{ES14}
M.~Ehrig and C.~Stroppel.
\newblock Nazarov--wenzl algebras, coideal subalgebras and categorified skew
  howe duality.
\newblock {\em Advances in Mathematics}, 331:58--142, 2018.

\bibitem{H01}
R.~H{\"a}ring-Oldenburg.
\newblock Cyclotomic birman--murakami--wenzl algebras.
\newblock {\em Journal of Pure and Applied Algebra}, 161(1-2):113--144, 2001.

\bibitem{KX01}
S.~K{\"o}nig and C.~Xi.
\newblock A characteristic free approach to brauer algebras.
\newblock {\em Transactions of the American Mathematical Society},
  353(4):1489--1505, 2001.

\bibitem{N90}
M.~Nazarov.
\newblock Young's orthogonal form of irreducible projective representations of
  the symmetric group.
\newblock {\em J. Lond Math. Soc.}, 42:437--451, 1990.

\bibitem{R05}
H.~Rui.
\newblock A criterion on the semisimple brauer algebras.
\newblock {\em Journal of Combinatorial Theory, Series A}, 111(1):78--88, 2005.

\bibitem{RS06}
H.~Rui and M.~Si.
\newblock A criterion on the semisimple brauer algebras ii.
\newblock {\em Journal of Combinatorial Theory, Series A}, 113(6):1199--1203,
  2006.

\bibitem{RY04}
H.~Rui and W.~Yu.
\newblock On the semi-simplicity of the cyclotomic brauer algebras.
\newblock {\em Journal of Algebra}, 277(1):187--221, 2004.

\bibitem{We88}
H.~Wenzl.
\newblock On the structure of {B}rauer's centralizer algebras.
\newblock {\em Annals of Mathematics}, 128(1):173--193, 1988.

\end{thebibliography}
\bibliographystyle{abbrv}  

\end{document}